\pdfoutput=1 

\documentclass{amsart}%
\usepackage{amsfonts}
\usepackage{amsmath}
\usepackage{amssymb}
\usepackage{graphicx}
\usepackage{version}
\usepackage{caption}%
\setcounter{MaxMatrixCols}{30}
\addtolength{\textwidth}{2cm} \addtolength{\textheight}{3cm}
\addtolength{\oddsidemargin}{-1cm}
\addtolength{\topmargin}{-1cm}
\theoremstyle{plain}
\newtheorem{theorem}{Theorem}[section]

\newtheorem{lemma}{Lemma}[section]
\newtheorem{proposition}{Proposition}[section]
\theoremstyle{definition}
\newtheorem{definition}{Definition}[section]
\newtheorem{example}{Example}[section]
\newtheorem{remark}{Remark}[section]
\numberwithin{equation}{section}
\excludeversion{comment1}
\begin{document}
\title[Flows on Fractals]{Conjugacies provided by fractal transformations I\ : Conjugate measures,
Hilbert spaces, orthogonal expansions, and flows, on self-referential spaces.}
\author[C. Bandt]{Christoph Bandt}
\address{University of Greifswald}
\author[M. F. Barnsley]{Michael Barnsley}
\address{Australian National University }
\author[M. Hegland]{Markus Hegland}
\address{Australian National University}
\author[A. Vince]{Andrew Vince}
\address{Australian National University\\
 }

\begin{abstract}
Theorems and explicit examples are used to show how transformations between
self-similar sets (general sense) may be continuous almost everywhere with
respect to stationary measures on the sets and may be used to carry well known
flows and spectral analysis over from familiar settings to new ones. The focus
of this work is on a number of surprising applications including (i) what we
call fractal Fourier analysis, in which the graphs of the basis functions are
Cantor sets, being discontinuous at a countable dense set of points, yet have
very good approximation properties; (ii) Lebesgue measure-preserving flows, on
polygonal laminas, whose wave-fronts are fractals. The key idea is to exploit
fractal transformations to provide unitary transformations between Hilbert
spaces defined on attractors of iterated function systems. Some of the
examples relate to work of Oxtoby and Ulam concerning ergodic flows on regions
bounded by polygons.

\end{abstract}
\maketitle

\section{Introduction}

In this paper we provide results and explicit examples to show how
transformations between some fractals, and other self-referential sets, may
both be continuous almost everywhere and map well-known flows and spectral
analysis from familiar settings to new ones. Our focus is on a number of
surprising applications including: (i) what we call "fractal Fourier
analysis", in which the basis functions are discontinuous at a countable dense
set of points of a real interval, yet have good approximation properties; (ii)
Lebesgue measure-preserving flows on tori whose wave-fronts are fractal curves.

The key idea is to exploit fractal transformations to provide unitary
transformations between Hilbert spaces defined on attractors of iterated
function systems. Some of our examples relate to the work of Oxtoby and Ulam
\cite{oxtoby}, concerning ergodic flows on real geometrical domains.

Let $A_{F}$ and $A_{G}$ be non-overlapping attractors of two contractive
iterated function systems (IFSs), $F$ and $G$ respectively. We give conditions
under which the fractal transformation $T_{FG}:A_{F}\rightarrow A_{G}$
(defined in Section \ref{sec2}) is measureable and continuous almost
everywhere with respect to any stationary measure $\mu_{F}$ (defined in
Section \ref{sec2}). We show that $T_{FG}$ yields an isometry $U_{FG}%
:\mathcal{L}^{2}(A_{F},\mu_{F})\rightarrow\mathcal{L}^{2}(A_{G},\mu_{G}))$,
where $\mu_{F}$ and $\mu_{G}$ are a corresponding pair of stationary measures.
If $L_{F}:D_{F}\subset\mathcal{L}^{2}(A_{F},\mu_{F})\rightarrow\mathcal{L}%
^{2}(A_{F},\mu_{F})$ is a linear operator with dense domain $D_{F},$ then
\[
L_{G}:=U_{FG}L_{F}U_{GF}%
\]
is a linear operator on $\mathcal{L}^{2}(A_{G},\mu_{G})$ with dense domain
$T_{FG}(D_{F})$. If $L_{F}$ is self-adjoint, then so is $L_{G}$. In some cases
$\mu_{F}$ is Lebesgue measure on a subset of $\mathbb{R}^{n}$ such as line
segment, a filled triangle, or a cube; and in other cases it a uniform measure
on a fractal such as a Sierpinski triangle. In these cases, familiar
differential and integral equations, including those associated with
Laplacians on post critically finite (p.c.f.) fractals \cite{kigami,
strichartz}, can be transformed to yield interesting counterparts on other
(not necessarily p.c.f.) fractals.

By way of examples (i) we introduce what we call "fractal Fourier analysis",
in which the basis functions are discontinuous at a countable dense set of
points, yet have good approximation properties including overcoming the
edge-effect problem that besets standard Fourier approximation; and (ii) we
introduce and exemplify certain flows on self-similar sets, we provide rough
versions of flows on tori, and we exhibit the solution of a heat equation on a
rough filled triangle, with Dirichlet boundary conditions.

\section{\label{sec2}Fractal transformations and invariant measures}

This section introduces some essential concepts that run throughout the paper,
including the invariant measure of an IFS with probabilities, called a
$p$-measure, and fractal transformations from the attractor of one IFS to the
attractor of another. The main result of this section are Theorem~\ref{thm:ZM}
which states that if an attractor is not equal to its dynamical boundary, then
all $p$-measures of the critical set, the dynamical boundary, and the forward
orbit of overlap set under the IFS (which we call the inner boundary), are
zero; and Theorem~\ref{tauthm} which states that a fractal transformation
between non-overlapping attractors is measurable and continuous almost
everywhere with respect to every $p$-measure, and that a such a fractal
transformation is $p$-measure preserving.

\subsection{Non-Overlapping Attractors and Fractal Transformations}

The purpose of this subsection is to define the central notions of
non-overlapping attractor and fractal transformation from one attractor to another.

Let $\mathbb{N=\{}1,2,3,...\}$ and $\mathbb{N}_{0}=\{0,1,2,...\}$. Throughout
this paper we restrict attention to iterated function systems (IFSs) of the
form%
\[
F=\{X;f_{1},f_{2},...,f_{N}\}
\]
where $N\in\mathbb{N}$ is fixed, $X$ is a complete metric space, and
$f_{i}:X\rightarrow X$ is a contraction for all $i\in I:=\{1,2,...,N\}$. By
\textbf{contraction} we mean there is $\lambda\in\lbrack0,1)$, such that
$d_{\mathbb{X}}(f_{i}(x),f_{i}(y))\leq\lambda d_{\mathbb{X}}(x,y)$ for all
$x,y\in X$, for all $i\in I$.

Define $F^{-1}:2^{A}\rightarrow2^{A}$ and $F:2^{A}\rightarrow2^{A}$ by
\[
F^{-1}(U)=\cup_{i=1}^{N}f_{i}^{-1}(U)\text{ and }F(U)=\cup_{i=1}^{N}%
f_{i}(U)\text{,}%
\]
for all $U\subset A$, where $f_{i}^{-1}(U)=\{x\in A:f_{i}(x)\in U\}$, and
$f_{i}(U)=\{f_{i}(x)\in A:x\in U\}$. Let $F^{-k}$ mean $F^{-1}$ composed with
itself $k$ times, let $F^{k}$ mean $F$ composed with itself $k$ times, for all
$k\in\mathbb{N}$, and let $F^{0}=F^{-0}=I$.

If ${\mathbb{H}}(X)$ denotes the collection of nonempty compact subsets of
$X$, then the classical Hutchinson operator $F\,:\,{\mathbb{H}}(X)\rightarrow
{\mathbb{H}}(X)$ is just the operator $F$ above restricted to ${\mathbb{H}%
}(X)$. According to the basic theory of contractive IFSs as developed in
\cite{hutchinson}, there is unique \textbf{attractor} $A\subset X$ of $F$.
That is, $A\ $is the unique nonempty compact subset of $X$ such that
\[
A=F(A)\text{.}%
\]
The attractor $A$ has the property
\[
A=\lim_{k\rightarrow\infty}F^{k}(S),
\]
where convergence is with respect to the Hausdorff metric and is independent
of $S\in{\mathbb{H}}(X)$.

Since, in this paper, we are only interested in $A$ itself, henceforth let
$X=A$. Moreover, throughout this paper the following assumptions are made:

\begin{itemize}
\item $F=\{A;f_{1},f_{2},...,f_{N}\}$ is an IFS with attractor $A$ and such
that each of its functions is a contraction and is a homeomorphism onto its
image. \vskip2mm
\end{itemize}

(Note that, under these assumptions, $f_{i}^{-1}(S):=\{a\in A:f_{i}(a)\in
S\}=f_{i}^{-1}($ $f_{i}(A)\cap S)$ for all $i$, for all $S\subset A$.)

Let $I=\{1,2,\dots,N\}$, and let $I^{\infty}$, referred to as the \textbf{code
space}, be the set of all infinite sequences $\theta=\theta_{1}\theta
_{2}\theta_{3}\cdots$ with elements from $I$. The \textbf{shift operator}
$S:I^{\infty}\rightarrow I^{\infty}$ is defined by $S(\theta_{1}\theta
_{2}\theta_{3}\cdots)=\theta_{2}\theta_{3}\theta_{4}\cdots$. Define a metric
$d$ on $I^{\infty}=\{1,2,...,N\}^{\infty}$ so that, for $\theta,\sigma\in
I^{\infty}$ with $\theta\neq\sigma$, the distance $d(\theta,\sigma)=2^{-k}$,
where $k$ is the least integer such that $\sigma_{k}\neq\theta_{k}$. The pair
$(I^{\infty},d)$ is a compact metric space.

\begin{definition}
The \textbf{coding map}, $\pi:I^{\infty}\rightarrow A$ is defined by
\[
\pi(\sigma)=\lim_{k\rightarrow\infty}f_{\sigma_{1}}\circ f_{\sigma_{2}}%
\circ...\circ f_{\sigma_{k}}(a)\text{,}%
\]
for any fixed $a\in A$, for all $\sigma=\sigma_{1}\sigma_{2}...\in I^{\infty}$.
\end{definition}

Under the assumption that the IFS is contractive, it is well known that the
limit is a single point, independent of $a\in A$, convergence is uniform over
$I^{\infty}$, and $\pi$ is continuous and onto.

\begin{example}
[The code space IFS]\label{ex:Z} The IFS $Z=\{I^{\infty};\,s_{1},s_{2}%
,\dots,s_{N}\}$, where $s_{i}:I^{\infty}\rightarrow I^{\infty}$ is defined by
$s_{i}(\sigma)=i\,\sigma$, satisfies all the conditions. In particular, the
contraction constant for all $i$ is $\lambda=\frac{1}{2}$. In this case $\pi$
is the identity map on $I^{\infty}$.
\end{example}

\begin{definition}
Define the \textbf{critical set} of $A$ (w.r.t. $F$) to be
\[
C=\bigcup\limits_{i\neq j}f_{i}(A)\cap f_{j}(A)\text{.}%
\]

\end{definition}

Let $\overline{U}$ be the closure of $U\subset A$.

\begin{definition}
Define the \textbf{dynamical boundary} of $A$ (w.r.t. $F$) to be
\[
\partial A=\overline{\bigcup\limits_{k=1}^{\infty}F^{-k}(C)}.
\]

\end{definition}

The notion of the dynamical boundary was introduced by Mor\'{a}n \cite{moran},
in the context of similitudes on $\mathbb{R}^{n}$. In general, $\partial A$ is
not equal to the topological boundary of $A$ (see Example~\ref{ex:db}).

\begin{definition}
For the IFS $F$, we define the \textbf{inner boundary} of the attractor $A$
(w.r.t. $F$) to be
\[
\widehat{C}=\bigcup_{k\in\mathbb{N}_{0}}F^{k}(C).
\]

\end{definition}

The inner boundary of $A$ is the set of points with more than one address: a
proof of the following proposition appears in \cite{kameyama}.

\begin{proposition}
\label{prop:single} $\widehat{C}=\{x\,:\,|\pi^{-1}(x)|\neq1\}$.
\end{proposition}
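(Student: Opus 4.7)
The plan is to prove the two inclusions separately, with both directions resting on the self-similarity identity $\pi(i\sigma)=f_i(\pi(\sigma))$ for any $i\in I$ and $\sigma\in I^\infty$, which is immediate from the definition of $\pi$ as an iterated limit.

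For the inclusion $\widehat{C}\subseteq\{x:|\pi^{-1}(x)|\neq 1\}$, I would first show that every $y\in C$ has at least two distinct addresses. Indeed, if $y\in f_i(A)\cap f_j(A)$ with $i\neq j$, then $y=f_i(a)=f_j(b)$ for some $a,b\in A$; choosing addresses $\alpha,\beta\in I^\infty$ with $\pi(\alpha)=a$, $\pi(\beta)=b$ (which exist since $\pi$ is onto), the strings $i\alpha$ and $j\beta$ are distinct addresses of $y$. To extend this to arbitrary $x\in F^k(C)$, write $x=f_{w_1}\circ\cdots\circ f_{w_k}(y)$ for some $y\in C$ and word $w_1\cdots w_k\in I^k$, pick two distinct addresses $\alpha',\beta'$ of $y$ as above, and observe that $w_1\cdots w_k\alpha'$ and $w_1\cdots w_k\beta'$ are two distinct addresses of $x$.

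For the reverse inclusion, suppose $x\in A$ has two distinct addresses $\sigma\neq\tau$. Let $k$ be the least index with $\sigma_k\neq\tau_k$, so $\sigma_1\cdots\sigma_{k-1}=\tau_1\cdots\tau_{k-1}=:w$. Applying the self-similarity identity repeatedly gives
\[
x=f_w\bigl(\pi(S^{k-1}\sigma)\bigr)=f_w\bigl(\pi(S^{k-1}\tau)\bigr),
\]
where $f_w:=f_{w_1}\circ\cdots\circ f_{w_{k-1}}$ (and $f_w$ is the identity when $k=1$). A further application of the identity shows $\pi(S^{k-1}\sigma)=f_{\sigma_k}(\pi(S^k\sigma))\in f_{\sigma_k}(A)$ and $\pi(S^{k-1}\tau)\in f_{\tau_k}(A)$. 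Since the two $f_w$-images coincide and $f_w$ is a homeomorphism onto its image (by the standing assumption), $\pi(S^{k-1}\sigma)=\pi(S^{k-1}\tau)$, and this common point therefore lies in $f_{\sigma_k}(A)\cap f_{\tau_k}(A)\subseteq C$. Hence $x\in f_w(C)\subseteq F^{k-1}(C)\subseteq\widehat{C}$.

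I expect no serious obstacle: the only technical point is keeping the indexing on the shifts, prefixes, and applications of $F^k$ consistent, and checking the boundary case $k=1$ (where the prefix $w$ is empty and $x$ itself lies in $C=F^0(C)$). Injectivity of the branch maps $f_i$ on their images, supplied by the hypothesis that each $f_i$ is a homeomorphism onto its image, is what licenses the conclusion that the two preimages under $f_w$ agree, and this is the one ingredient worth flagging.
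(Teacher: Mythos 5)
Your argument is correct. Note, however, that the paper does not actually supply a proof of this proposition: it simply states that a proof appears in the cited work of Kameyama, so there is no in-paper argument to compare against. What you have written is the standard self-contained proof, and both directions check out. The forward inclusion correctly uses the identity $\pi(i\sigma)=f_i(\pi(\sigma))$ to prepend the word $w_1\cdots w_k$ to two addresses of a critical point that differ in their first symbol, producing two addresses of $x$ differing at position $k+1$. The reverse inclusion correctly isolates the first index $k$ at which two addresses of $x$ disagree, cancels the common prefix $f_w$ using injectivity of the branch maps (which is exactly the paper's standing assumption that each $f_i$ is a homeomorphism onto its image), and lands the common shifted point in $f_{\sigma_k}(A)\cap f_{\tau_k}(A)\subseteq C$, whence $x\in F^{k-1}(C)\subseteq\widehat{C}$. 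You are right to flag injectivity as the one essential ingredient beyond bookkeeping; without it the cancellation step fails. Your proof has the virtue of making the result self-contained within the paper's own hypotheses rather than deferring to an external reference.
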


\begin{definition}
Define $A_{F}$ to be \textbf{non-overlapping} (w.r.t. $F$) when
\[
A\neq\partial A\text{.}%
\]

\end{definition}

\begin{example}
\label{ex:db} Let $F=\{[0,1];\,f_{1},f_{2}\}$, where the metric on the unit
interval $[0,1]$ is the Euclidean metric. Note that the topological boundary
of $[0,1]$ is empty; every point in $[0,1]$ lies in its interior. If
$f_{1}(x)=\frac{1}{2}\,x,\,f_{2}(x)=\frac{1}{2}\,x+\frac{1}{2}$, then the
dynamical boundary of the attractor $A=[0,1]$ is $\partial A=\{0,1\}$. In this
case, by definition, $A$ is non-overlapping. On the other hand, if
$f_{1}(x)=\frac{2}{3}\,x,\,f_{2}(x)=\frac{2}{3}\,x+\frac{1}{3}$, then again
$A=[0,1]$, but $\partial A=[0,1]$. In this case $A$ is overlapping.
\end{example}

We are going to need the following topological lemma, which generalizes a
result in \cite{manifolds}. A point $\omega\in I^{\infty}$ is called
\textbf{disjunctive} if $\left\{  S^{k}\omega:k\in\mathbb{N}\right\}  $ is
dense in $I^{\infty}$.

\begin{lemma}
\label{lemma01} Let $F=\{A;f_{1},f_{2},...,f_{N}\}$ be an IFS with attractor
$A$, and let $\omega\in I^{\infty}$ be disjunctive. We have $\pi(\omega)\in
A\backslash\partial A$ if and only if $A\backslash\partial A\neq\emptyset$.
\end{lemma}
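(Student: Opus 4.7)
The ``only if'' direction is trivial, so the plan is to prove the ``if'' direction by contrapositive: assuming $\pi(\omega)\in\partial A$ with $\omega$ disjunctive, I want to conclude $\partial A=A$, contradicting $A\setminus\partial A\neq\emptyset$. The engine of the argument is a one-step invariance claim, which makes no use of disjunctivity: \emph{for every $\sigma\in I^{\infty}$, if $\pi(\sigma)\in\partial A$, then $\pi(S\sigma)\in\partial A$.}

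Granted the invariance claim, induction on $k$ gives $\pi(S^{k}\omega)\in\partial A$ for all $k\in\mathbb{N}_{0}$. Disjunctivity of $\omega$ says that $\{S^{k}\omega:k\in\mathbb{N}\}$ is dense in $I^{\infty}$, and continuity plus surjectivity of $\pi$ then force $\{\pi(S^{k}\omega):k\in\mathbb{N}\}$ to be dense in $A$. Since $\partial A$ is closed in $A$, this yields $\partial A=A$, the desired contradiction.

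For the invariance claim, I would set $x=\pi(\sigma)$ and $y=\pi(S\sigma)$, so that $x=f_{\sigma_{1}}(y)$, and pick $x_{n}\to x$ with $x_{n}\in F^{-k_{n}}(C)$, $k_{n}\geq 1$. The argument splits according to whether infinitely many $x_{n}$ lie in $f_{\sigma_{1}}(A)$. In the first case, passing to a subsequence I may assume all $x_{n}\in f_{\sigma_{1}}(A)$, so $y_{n}:=f_{\sigma_{1}}^{-1}(x_{n})$ is defined and converges to $y$ because $f_{\sigma_{1}}$ is a homeomorphism onto its image; unwinding the definition of $F^{-k}(C)$, the relation $x_{n}=f_{\sigma_{1}}(y_{n})\in F^{-k_{n}}(C)$ forces $y_{n}\in F^{-(k_{n}+1)}(C)$, giving $y\in\partial A$. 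In the opposite case, eventually $x_{n}\in A\setminus f_{\sigma_{1}}(A)\subseteq\bigcup_{i\neq\sigma_{1}}f_{i}(A)$; by the pigeonhole principle I may assume $x_{n}\in f_{i}(A)$ for a single $i\neq\sigma_{1}$, and since $f_{i}(A)$ is closed, the limit satisfies $x\in f_{i}(A)\cap f_{\sigma_{1}}(A)\subseteq C$, whence $y\in f_{\sigma_{1}}^{-1}(C)\subseteq F^{-1}(C)\subseteq\partial A$.

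The main obstacle I anticipate is the case split itself: each sub-case is short, but the two together need to be set up carefully, and in the first case one must keep the composition order in the definition of $F^{-k}(C)$ straight, so that prepending the symbol $\sigma_{1}$ to the witnessing word for $x_{n}$ really yields a witnessing word of length $k_{n}+1$ for $y_{n}$. With that bookkeeping in place, the rest of the argument is entirely routine.
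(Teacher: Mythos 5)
Your proposal is correct and follows essentially the same route as the paper: the shift-invariance claim $F^{-1}(\partial A)\subset\partial A$ (hence $\pi(\sigma)\in\partial A\Rightarrow\pi(S\sigma)\in\partial A$) combined with density of $\{\pi(S^{k}\omega)\}$ coming from disjunctivity and continuity of $\pi$, and closedness of $\partial A$. The only difference is that the paper asserts the invariance claim without proof, whereas you supply a correct detailed argument for it via the case split on whether the approximating points lie in $f_{\sigma_{1}}(A)$.
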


\begin{proof}
We begin with two observations. (i)The set $\partial A$ is closed and
$F^{-1}(\partial A)\subset\partial A$. Hence, if $\theta\in I^{\infty}$ obeys
$\pi(\theta)\in\partial A$, then $\pi(S\theta)\in\partial A$, whence
$\pi(S^{k}\theta)\in\partial A$ for all $k\in\mathbb{N}_{0}$, whence
$\overline{\{\pi(S^{k}\theta)\}_{k=0}^{\infty}}\subset\partial A$. (ii) If
$\omega\in I^{\infty}$ is disjunctive, then, using the continuity of $\pi$,
$\overline{\{\pi(S^{k}\omega)\}_{k=0}^{\infty}}=A$.

Let $\omega\in I^{\infty}$ be disjunctive.

($\Rightarrow$)Suppose that $\pi(\omega)\in A\backslash\partial A$. Then
$A\backslash\partial A\neq\emptyset$.

($\Leftarrow$)Suppose that $A\backslash\partial A\neq\emptyset$. If
$\pi(\omega)\in$ $\partial A$, it follows that $S^{k}\omega\in\partial A$ for
all $k$, so by (i) and (ii), $A=\overline{\{\pi(S^{k}\omega)\}_{k=0}^{\infty}%
}\subset\partial A$; but $\partial A\subset A,$ so $A=\partial A;$ hence
$A\backslash\partial A=\emptyset$, which is not possible, so $\pi(\omega)\in
A\backslash\partial A$.
\end{proof}

The code space $I^{\infty}$ is equipped with the lexicographical ordering, so
that $\theta>\sigma$ means $\theta\neq\sigma$ and $\theta_{k}>\sigma_{k}$
where $k$ is the least index such that $\theta_{k}\neq\sigma_{k}$. Here
$1>2>3\cdots>N-1>N$.

\begin{definition}
A \textbf{section} of the coding map $\pi:I^{\infty}\rightarrow A$ is a map
$\tau:A\rightarrow I^{\infty}$ such that $\pi\circ\tau$ is the identity. In
other words $\tau$ is a map that assigns to each point in $A$ an
\textit{address} in the code space. The \textbf{top section} of $\pi
:I^{\infty}\rightarrow A$ is the map $\tau:A\rightarrow I^{\infty}$ given by
\[
\tau(x)=\max\pi^{-1}(x)
\]
for all $x\in A$, where the maximum is with respect to the lexicographic
ordering. The value $\tau(x)$ is well-defined because $\pi^{-1}(x)$ is a
closed subset of $I^{\infty}$.
\end{definition}

The top section is forward shift invariant in the sense that $S(\tau
(A))=\tau(A)$. See \cite{BV4} for a classification, in terms of masks, of all
shift invariant sections, namely sections such that $S(\tau(A))\subset\tau(A)$.

\begin{definition}
\label{def:FT} Let $A_{F}$ and $A_{G}$ be the attractors, respectively, of
IFSs $F=\{A_{F};f_{1},f_{2},...,f_{N}\}$ and $G=\{A_{G};g_{1},g_{2}%
,...,g_{N}\}$ with the same number of functions. The \textbf{fractal
transformations} $T_{FG}:A_{F}\rightarrow A_{G}$ and $T_{GF}:A_{G}\rightarrow
A_{F}$ are defined (see for example \cite{Tops, BHI}) to be
\[
T_{FG}=\pi_{G}\circ\tau_{F}\qquad\text{ and }\qquad T_{GF}=\pi_{F}\circ
\tau_{G},
\]
where $\tau$ is the top section. If $T_{FG}$ is a homeomorphism, then it is
called a \textbf{fractal homeomorphism}, and in this case $T_{GF}%
=(T_{FG})^{-1}$.
\end{definition}

A more general notion of fractal transformation is similarly defined by taking
$\tau$ to be any shift invariant section; see \cite{BV4}. The following simple
proposition is useful. It is well-known, see for example \cite[Theorem
1]{monthly} and \cite{BHI}, for references and subtler results.

\begin{proposition}
Let IFS $F$ be a non-overlapping with attractor $A$, and let $P_{F}=\{\pi
^{-1}(x)\,:\,x\in A\}$, which is a partition of the code space $I^{\infty}$.
For two non-overlapping IFSs $F$ and $G$, and fractal transformation $T_{FG}$,
if $P_{F}=P_{G}$, then $T_{FG}$ is a homeomorphism.
\end{proposition}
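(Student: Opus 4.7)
The plan is to bypass the possibly-discontinuous top section $\tau_{F}$ altogether by introducing an auxiliary map $\Phi:A_{F}\rightarrow A_{G}$ characterized by $\pi_{G}=\Phi\circ\pi_{F}$, then identify $T_{FG}$ with $\Phi$ and show $\Phi$ is a homeomorphism via the universal property of quotient maps.

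First I would unpack the hypothesis: the equality of partitions $P_{F}=P_{G}$ means exactly that for all $\sigma,\sigma'\in I^{\infty}$,
\[
\pi_{F}(\sigma)=\pi_{F}(\sigma')\iff\pi_{G}(\sigma)=\pi_{G}(\sigma').
\]
Hence the rule $\Phi(\pi_{F}(\sigma))=\pi_{G}(\sigma)$ unambiguously defines a set-theoretic bijection $\Phi:A_{F}\to A_{G}$, whose inverse is given by the symmetric formula. Equivalently, for each $x\in A_{F}$ the block $\pi_{F}^{-1}(x)\in P_{F}=P_{G}$ equals $\pi_{G}^{-1}(y)$ for a unique $y\in A_{G}$, and $\Phi(x):=y$.

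Next I would identify $T_{FG}$ with $\Phi$. For any $x\in A_{F}$ the top section gives $\tau_{F}(x)\in\pi_{F}^{-1}(x)=\pi_{G}^{-1}(\Phi(x))$, so applying $\pi_{G}$ yields
\[
T_{FG}(x)=\pi_{G}(\tau_{F}(x))=\Phi(x).
\]
Note that this step uses only that $\tau_{F}(x)$ is \emph{some} address of $x$; the particular choice (the lexicographic maximum) is irrelevant under $P_{F}=P_{G}$.

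Finally I would establish the homeomorphism by a quotient-map argument. Since $I^{\infty}$ is compact, $A_{F}$ is Hausdorff, and $\pi_{F}$ is a continuous surjection, $\pi_{F}$ is a quotient map. The identity $\Phi\circ\pi_{F}=\pi_{G}$ together with continuity of $\pi_{G}$ forces $\Phi$ to be continuous by the universal property of the quotient; by symmetry $\Phi^{-1}$ is continuous as well, so $\Phi=T_{FG}$ is a homeomorphism. The only real subtlety here is recognizing that one should factor through $\pi_{F}$ rather than try to prove continuity of $\tau_{F}$ itself (which typically fails on the inner boundary $\widehat{C}$); once that observation is made, the argument is essentially a diagram chase and the non-overlapping hypothesis is used only implicitly via the existing framework that guarantees $\pi_{F},\pi_{G}$ are continuous surjections onto their attractors.
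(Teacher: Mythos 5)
Your argument is correct: the equality $P_{F}=P_{G}$ does exactly define a bijection $\Phi$ with $\Phi\circ\pi_{F}=\pi_{G}$, the identification $T_{FG}=\Phi$ is immediate since $\tau_{F}(x)$ is some address of $x$, and the quotient-map step is valid because $\pi_{F}$ is a continuous surjection from the compact space $I^{\infty}$ onto the Hausdorff space $A_{F}$ and is therefore closed, hence a quotient map. The paper itself gives no proof, deferring to \cite[Theorem 1]{monthly} and \cite{BHI}, and your factorization through $\pi_{F}$ is essentially the standard argument given there; your closing observation that the non-overlapping hypothesis plays no role in this particular statement is also accurate, since continuity and surjectivity of the coding maps hold for any contractive IFS.
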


\subsection{Invariant Measures on the Attractor of an IFS}

\label{sec:IM} In this subsection we recall the definition of the invariant
measures on an IFS with probabilities, also called $p$-measures, and determine
that the dynamical boundary of the attractor $A$ and a certain subset of $A$
associated with the critical set of $A$, that we call the inner boundary, have
measure zero.

\begin{definition}
Let $p=(p_{1},p_{2},...,p_{N})$ satisfy $p_{1}+p_{2}+...+p_{N}=1$ and
$p_{i}>0$ for $i=1,2,...,N$. Such a positive $N$-tuple $P$ will be referred to
as a \textbf{probability vector}. It is well known that there is a unique
normalized positive Borel measures $\mu$ supported on $A$ and invariant under
$F$ in the sense that%
\begin{equation}
\mu(B)=\sum_{i=1}^{N}p_{i}\,\mu(f_{i}^{-1}(B)) \label{eq:invariance}%
\end{equation}
for all Borel subsets $B$ of $X$. We call $\mu$ the \textbf{invariant measure
of} $F$ corresponding to the probability vector $p$ and refer to it as the
$p$\textbf{-measure} (w.r.t. $F$). To emphasize the dependence on $p$, we may
write $\mu_{p}$ in place of $\mu$.
\end{definition}

\begin{example}
\label{ex:mZ} This is a continuation of Example~\ref{ex:Z}, where
$Z=\{I^{\infty};\,s_{1},s_{2},\dots,s_{N}\}$. For a probability vector
$p=(p_{1},p_{2},...,p_{N})$, the corresponding $p$-measure is the Bernoulli
measure $\nu_{p}$ where
\[
\nu_{p}\left(  [\sigma_{1}\,\sigma_{2}\cdots\sigma_{n}]\right)  =\prod
_{i=1}^{n}p_{\sigma_{i}},
\]
where $[\sigma_{1}\,\sigma_{2}\cdots\sigma_{n}]:=\{\omega\in I^{\infty}:$
$\omega_{i}=\sigma_{i}$ for $i=1,2,..,N\}$ denotes a cylinder set, the
collection of which generate the sigma algebra of Borel sets of $I^{\infty}$.
\end{example}

The following known result, see for example \cite[statement and proof of
Theorem 9.3]{falconer}, is relevent to the present work.

\begin{proposition}
\label{prop:similitudes} If $F$ consists of similitudes with scaling ratio of
$f_{i}$ equal to $c_{i}<1$, and obeys the open set condition, and if the
probabilities are chosen such that $p_{i}=c_{i}^{D}$, where $D$ is the
Hausdorff dimension of $A$, then $\mu_{p}$ is equal to the Hausdorff measure
on $A$.
\end{proposition}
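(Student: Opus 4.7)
The plan is to identify the normalized Hausdorff measure $\nu := \mathcal{H}^{D}|_{A}/\mathcal{H}^{D}(A)$ with the $p$-measure $\mu_{p}$ by exploiting the uniqueness asserted in the definition of the invariant measure: it suffices to check that $\nu$ is a Borel probability measure supported on $A$ satisfying the invariance relation \eqref{eq:invariance} with the weights $p_{i}=c_{i}^{D}$.

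First I would record the three standard ingredients under the open set condition (OSC): (a) Moran's equation $\sum_{i=1}^{N}c_{i}^{D}=1$, which guarantees that $p=(c_{1}^{D},\dots,c_{N}^{D})$ is a genuine probability vector; (b) $0<\mathcal{H}^{D}(A)<\infty$, so that $\nu$ is well-defined as a Borel probability measure concentrated on $A$; and (c) the measure-separation property $\mathcal{H}^{D}(f_{i}(A)\cap f_{j}(A))=0$ for $i\neq j$. All three are classical consequences of the OSC and are proved in \cite{falconer}.

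The computational core is a single chain of equalities for any Borel $B\subset A$. Using the decomposition $A=\bigcup_{i}f_{i}(A)$ together with (c),
\[
\mathcal{H}^{D}(B)=\sum_{i=1}^{N}\mathcal{H}^{D}(B\cap f_{i}(A)).
\]
Since each $f_{i}$ is a similitude of ratio $c_{i}$ and a homeomorphism onto $f_{i}(A)$, the scaling law for Hausdorff measure gives $\mathcal{H}^{D}(B\cap f_{i}(A))=c_{i}^{D}\mathcal{H}^{D}(f_{i}^{-1}(B\cap f_{i}(A)))$. The last preimage equals $f_{i}^{-1}(B)$ by the parenthetical remark following the standing assumption on $F$ in Section~\ref{sec2}. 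Dividing by $\mathcal{H}^{D}(A)$ then yields
\[
\nu(B)=\sum_{i=1}^{N}c_{i}^{D}\,\nu(f_{i}^{-1}(B))=\sum_{i=1}^{N}p_{i}\,\nu(f_{i}^{-1}(B)),
\]
which is exactly \eqref{eq:invariance}. By the uniqueness clause in the definition of the $p$-measure, $\nu=\mu_{p}$, i.e. $\mu_{p}=\mathcal{H}^{D}|_{A}/\mathcal{H}^{D}(A)$.

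The only genuine obstacle is ingredient (c), the measure-separation under OSC; once one has it, the scaling law for similitudes and the invariance equation fit together almost mechanically. The remaining bookkeeping — checking that $f_{i}^{-1}(B)$ computed inside the ambient space coincides with $f_{i}^{-1}(B\cap f_{i}(A))$, and that $B\subset A$ is without loss of generality because both $\nu$ and $\mu_{p}$ are supported on $A$ — is routine.
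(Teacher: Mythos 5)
Your argument is correct, but there is little to compare it against: the paper does not prove Proposition~\ref{prop:similitudes} at all, it states it as a known result and refers to the statement and proof of Theorem~9.3 of \cite{falconer}. Your proof is the standard uniqueness argument and it works. The chain
\[
\mathcal{H}^{D}(B)=\sum_{i}\mathcal{H}^{D}\bigl(B\cap f_{i}(A)\bigr)=\sum_{i}c_{i}^{D}\,\mathcal{H}^{D}\bigl(f_{i}^{-1}(B)\bigr)
\]
is valid, the identification $f_{i}^{-1}(B\cap f_{i}(A))=f_{i}^{-1}(B)$ is exactly the paper's standing convention, and invoking the uniqueness clause in the definition of the $p$-measure is legitimate because $\nu$ is a normalized positive Borel measure on $A$ satisfying \eqref{eq:invariance} (its support is in fact all of $A$, since the support is a nonempty compact set fixed by the Hutchinson operator). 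You correctly isolate ingredient (c), the vanishing of $\mathcal{H}^{D}(f_{i}(A)\cap f_{j}(A))$, as the only substantive input; note that it already follows from (a) and (b), since $\mathcal{H}^{D}(A)\le\sum_{i}c_{i}^{D}\mathcal{H}^{D}(A)=\mathcal{H}^{D}(A)$ forces equality in finite subadditivity of a finite measure, and the paper itself records this fact in the discussion preceding Theorem~\ref{thm:ZM}, attributing it to Bandt and Graf \cite{BG}. One cosmetic remark: the phrase ``equal to the Hausdorff measure on $A$'' in the statement must be read, as you do, as the normalized restriction $\mathcal{H}^{D}|_{A}/\mathcal{H}^{D}(A)$, since $\mu_{p}$ is by definition a probability measure.
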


The Hausdorff measure prescribed in Proposition \ref{prop:similitudes} is
sometimes referred to as the \textit{uniform measure} on the attractor.

The following result is proved in \cite{hutchinson}.

\begin{lemma}
\label{lem:ZF} If $F$ is an IFS with probability vector $p$, corresponding
invariant measure $\mu_{p}$, and $Z$ is the IFS of Example~\ref{ex:Z} with the
same probability vector $p$ and corresponding invariant measure $\nu_{p}$,
then
\[
\mu_{p}(B)=\nu_{p}(\pi_{F}^{-1}(B)
\]
for all Borel sets $B$.
\end{lemma}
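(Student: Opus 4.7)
The plan is to push the Bernoulli measure $\nu_p$ forward along $\pi_F$ and show that the resulting measure on $A$ satisfies the defining invariance equation \eqref{eq:invariance}, hence coincides with $\mu_p$ by the uniqueness clause of the invariant measure.

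First I would define $\tilde\mu(B):=\nu_p(\pi_F^{-1}(B))$ for every Borel set $B\subset A$. Since $\pi_F:I^\infty\to A$ is continuous (hence Borel measurable) and onto, $\tilde\mu$ is a well-defined Borel probability measure supported on $A$. The key algebraic identity needed is the semiconjugacy
\[
\pi_F\circ s_i=f_i\circ\pi_F\qquad(i=1,\dots,N),
\]
which follows immediately from the definitions $s_i(\sigma)=i\sigma$ and $\pi_F(i\sigma)=f_i(\pi_F(\sigma))$. From this one gets, for any Borel $B\subset A$,
\[
s_i^{-1}\bigl(\pi_F^{-1}(B)\bigr)=\pi_F^{-1}\bigl(f_i^{-1}(B)\bigr),
\]
by a direct chase of the definitions of preimage.

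Next I would apply the invariance of $\nu_p$ with respect to the code space IFS $Z$ (Example \ref{ex:Z} with probabilities $p$), which gives
\[
\nu_p\bigl(\pi_F^{-1}(B)\bigr)=\sum_{i=1}^{N}p_i\,\nu_p\bigl(s_i^{-1}(\pi_F^{-1}(B))\bigr)=\sum_{i=1}^{N}p_i\,\nu_p\bigl(\pi_F^{-1}(f_i^{-1}(B))\bigr),
\]
i.e.\ $\tilde\mu(B)=\sum_{i=1}^{N}p_i\,\tilde\mu(f_i^{-1}(B))$. Therefore $\tilde\mu$ is a normalized Borel measure supported on $A$ and invariant under $F$ in the sense of \eqref{eq:invariance}. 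By the uniqueness of such a measure (the content of Hutchinson's theorem cited in the definition of the $p$-measure) we conclude $\tilde\mu=\mu_p$, which is the desired identity.

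The only subtle step is verifying the semiconjugacy-preimage identity at the level of sets and confirming that $\tilde\mu$ is indeed a Borel probability measure; once those are in hand, the argument is a one-line application of the invariance of $\nu_p$ followed by the uniqueness of $\mu_p$. No obstacle beyond this bookkeeping is anticipated, since the question of whether $\pi_F$ is Borel measurable is handled by continuity, and the preimage of a Borel set under $s_i$ is automatically Borel in the compact metric space $I^\infty$.
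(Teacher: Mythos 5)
Your proof is correct: pushing $\nu_p$ forward along $\pi_F$, verifying the invariance equation \eqref{eq:invariance} via the semiconjugacy $\pi_F\circ s_i=f_i\circ\pi_F$, and invoking uniqueness of the $p$-measure is exactly the standard argument. The paper does not give its own proof but simply cites Hutchinson, and your argument is essentially the one found there, so there is nothing to add.
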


The following theorem relates the topological concept of non-overlapping to
the $p$-measures of the dynamical boundary and the inner boundary. It can be
viewed as an extension of a result of Bandt and Graf \cite{BG}, who show that
the Hausdorff measure of the critical set of the attractor of an IFS of
similitudes in $\mathbb{R}^{n},$ that obeys the OSC, is zero.

\begin{theorem}
\label{thm:ZM} Let $F=\{A;f_{1},f_{2},...,f_{N}\}$ be an IFS (with
probabilities $p$) with attractor $A$, invariant measure $\mu_{p},$ dynamical
boundary $\partial A,$ and inner boundary $\widehat{C}$. Let $\mu_{p}$ be an
invariant measure for $F$. If $A$ is non-overlapping then, for all probability
vectors $p$,

(i) $\mu_{p}(A\backslash\partial A)=1$;

(ii)$\ \mu_{p}(\widehat{C})=0$.
\end{theorem}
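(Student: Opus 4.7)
The plan is to transport both statements to the code space $I^{\infty}$ via Lemma \ref{lem:ZF} and exploit the shift-invariance of the Bernoulli measure $\nu_p$ together with the fact that the set $D\subseteq I^{\infty}$ of disjunctive sequences has full $\nu_p$-measure. That $\nu_p(D)=1$ is a standard Borel--Cantelli argument: for each finite word $w$ one has $\nu_p([w])>0$, so almost every $\omega$ meets every cylinder, and a countable union over finite words gives a null exceptional set. (Ergodicity of the shift with respect to $\nu_p$ gives the same conclusion.)

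For (i), the non-overlapping hypothesis gives $A\setminus\partial A\neq\emptyset$, so Lemma \ref{lemma01} yields $\pi(D)\subseteq A\setminus\partial A$, i.e.\ $D\subseteq\pi^{-1}(A\setminus\partial A)$. Since $\partial A$ is closed, $A\setminus\partial A$ is Borel, and Lemma \ref{lem:ZF} turns $\nu_p(D)=1$ into $\mu_p(A\setminus\partial A)=1$.

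For (ii), I will first deduce that $\mu_p(C)=0$. The inclusion $F^{-1}(C)\subseteq\partial A$ is immediate from the definition of the dynamical boundary, so (i) gives $\mu_p(F^{-1}(C))=0$; since each $f_i^{-1}(C)\subseteq F^{-1}(C)$, the invariance identity $\mu_p(C)=\sum_i p_i\,\mu_p(f_i^{-1}(C))$ forces $\mu_p(C)=0$. The final move is to cover $\pi^{-1}(\widehat{C})$ by backward shift translates of $\pi^{-1}(C)$: if $\omega$ and $\omega'\neq\omega$ both project to the same point and first disagree at position $k$, then injectivity of the common initial composition $f_{\omega_1}\circ\cdots\circ f_{\omega_{k-1}}$ forces $\pi(S^{k-1}\omega)=\pi(S^{k-1}\omega')\in C$. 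Hence
\[
\pi^{-1}(\widehat{C})\;\subseteq\;\bigcup_{j\ge 0}S^{-j}\bigl(\pi^{-1}(C)\bigr),
\]
and shift-invariance of $\nu_p$ combined with $\nu_p(\pi^{-1}(C))=\mu_p(C)=0$ yields, via Lemma \ref{lem:ZF} and countable subadditivity, that $\mu_p(\widehat{C})=0$.

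The main obstacle I anticipate is the last step. The set $\widehat{C}=\bigcup_{k\ge 0}F^{k}(C)$ is defined by \emph{forward} iteration of the IFS, whereas the only measure-theoretic leverage available on code space is the \emph{backward} shift-invariance of $\nu_p$. Recognising that any point with two addresses must, after dropping the longest common initial segment, expose a point of $C$ at the tail — so that the forward orbit of $C$ pulls back under $\pi$ to shift translates rather than forward images of $\pi^{-1}(C)$ — is the key inclusion that makes the subadditive bound work. Once it is established, both parts of the theorem reduce to the pair $\nu_p(D)=1$ and $\mu_p(C)=0$.
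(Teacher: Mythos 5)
Your proof is correct. Part (i) coincides with the paper's argument: Lemma \ref{lemma01}, the fact that the disjunctive sequences have full Bernoulli measure, and Lemma \ref{lem:ZF}; the paper simply cites Staiger for $\nu_p(D)=1$ rather than sketching the Borel--Cantelli argument. For part (ii) you begin exactly as the paper does, deducing $\mu_p(C)=0$ from (i) and the invariance identity, but then you take a genuinely different route. The paper stays on the attractor and proves $\mu_p(F^{k}(C))=0$ for every $k$ by induction, the key step being the computation $\mu(f_j(C))=\sum_i p_i\,\mu(f_i^{-1}(f_j(C)))$ together with the inclusion $f_i^{-1}(f_j(C))\subseteq f_i^{-1}\bigl(f_i(A)\cap f_j(A)\bigr)\subseteq f_i^{-1}(C)$ for $i\neq j$, which annihilates every term; countable subadditivity then finishes. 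You instead lift everything to code space: using Proposition \ref{prop:single} (which you invoke implicitly when you assume every point of $\widehat{C}$ admits two addresses) you establish $\pi^{-1}(\widehat{C})\subseteq\bigcup_{j\ge 0}S^{-j}\bigl(\pi^{-1}(C)\bigr)$ via the deleted-common-prefix observation, and you conclude with the shift-invariance of $\nu_p$ and Lemma \ref{lem:ZF}. Both routes are sound. Yours trades the slightly fiddly $f_i^{-1}(f_j(C))$ estimate for the cleaner symbolic fact that two addresses of a point expose a critical point once their longest common initial segment is dropped, and it makes explicit the role of the shift-invariant measure on $I^{\infty}$; the paper's version has the mild advantage of yielding, without reference to addresses, the intermediate fact that each forward image $F^{k}(C)$ is individually null.
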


\begin{proof}
To simplify notation let $p$ be any probability vector, let $\mu=\mu_{p}$, and
let $v=v_{p}$, the $p$-measure on $I^{\infty}$ introduced in Examples
\ref{ex:Z} and \ref{ex:mZ}.

Proof of (i): Let $D\subset I^{\infty}$ be the set of disjunctive points. If
$A$ is non-overlapping then, by Lemma \ref{lemma01}, $\pi(D)\subset
A\backslash\partial A$.

Hence
\[
1\geq\mu(A\backslash\partial A)\geq\mu(\pi(D))=v(D)=1\text{,}%
\]
where we have used Lemma \ref{lem:ZF} and the fact that $v(D)=1$ (for all
vectors $p$), see \cite{staiger}.

Proof of (ii): Let $C$ be the critical set of $A$. It follows from (1) that
$\mu(F^{-1}(C))=0$ and therefore $\mu(f_{i}^{-1}(C))=0$ for all $i$. By the
invariance property
\[
\mu(C)=\sum\limits_{i=1}^{N}p_{i}\mu(f_{i}^{-1}(C))=0.
\]
Now, for each $j$,
\begin{align*}
\mu(f_{j}(C))  &  =\sum\limits_{i=1}^{N}p_{i}\mu(f_{i}^{-1}(f_{j}%
(C)))=p_{j}\mu(C)+\sum\limits_{i\neq j}p_{j}\mu(f_{i}^{-1}(f_{j}(C)))\\
&  =\sum\limits_{i\neq j}p_{j}\mu(f_{i}^{-1}(f_{j}(C)))\leq\sum\limits_{i\neq
j}p_{j}\mu(f_{i}^{-1}(C))=0,
\end{align*}
the inequality for the following reason: since $f_{i}^{-1}(S)=$ $f_{i}%
^{-1}(f_{i}(A)\cap S)$, for all $S\subset A$, we have that $f_{i}^{-1}%
(f_{j}(C))\subset f_{i}^{-1}\left(  f_{i}(A)\cap f_{j}(A)\right)  \subset
f_{i}^{-1}(C)$, and the last equality because $\mu(f_{i}^{-1}(C)=0$. Since
this is true for all $i$, we have $\mu(F(C))=0$. Induction can now be used,
similarly, to show that $\mu(F^{k}(C))=0$ for all $k\in\mathbb{N}_{0}$. This
suffices to prove (2) in the statement of the theorem.
\end{proof}

\begin{remark}
By Theorem~\ref{thm:ZM}, the definition of non-overlapping, i.e., $\partial
A\neq A$, is independent of the probability vector $p$. Also, if an IFS is
non-overlapping, then whether or not $\mu_{p}(C)=0$ is independent of $p$.
Also, if
\begin{equation}
\overline{\bigcup\limits_{k=1}^{\infty}F^{-k}(C)}={\bigcup\limits_{k=1}%
^{\infty}F^{-k}(C)},\label{eq:equiv}%
\end{equation}
which occurs for example if $A$ is p.c.f., then the converse to
Theorem~\ref{thm:ZM} holds, namely, if $\mu_{p}(C)=0$ for any probability
vector $p$, then $A$ is non-overlapping. In particular if Equation
\ref{eq:equiv} holds, then whether or not $\mu_{p}(C)=0$ is independent of the
probability vector $p$.
\end{remark}

The proof of the following theorem appears in \cite[Theorem 2.1]{MR}, which
also states that, under the assumption of the open set condition (OSC),
whether or not $\mu_{p}(C)=0,$ is independent of $p$; but that theorem applies
only to an IFS consisting of similitudes.

\begin{theorem}
\label{thm:DB} Let $F$ be a contractive IFS of similitudes on $\mathbb{R}^{n}%
$, that obeys set condition. If $C$ is the critical set, then $\mu_{p}(C)=0$
for all $p$-measures $\mu_{p}$ (w.r.t. $F$).
\end{theorem}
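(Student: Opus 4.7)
The plan is to reduce Theorem~\ref{thm:DB} to Theorem~\ref{thm:ZM}(ii) by proving that, under the hypotheses, $A$ is non-overlapping in the sense of the paper. Once $A\neq\partial A$ is established, Theorem~\ref{thm:ZM}(ii) gives $\mu_{p}(\widehat{C})=0$ for every probability vector $p$, and since $C=F^{0}(C)\subset\widehat{C}$, the conclusion $\mu_{p}(C)=0$ follows for all $p$ simultaneously.

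First I would upgrade the OSC to the strong open set condition. For an IFS of similitudes on $\mathbb{R}^{n}$, a theorem of Schief provides an open set $V\subset\mathbb{R}^{n}$ with $f_{i}(V)\subset V$, $f_{i}(V)\cap f_{j}(V)=\emptyset$ for $i\neq j$, and $V\cap A\neq\emptyset$. Picking $x_{0}\in V\cap A$, the approximants $f_{\tau_{1}}\circ\cdots\circ f_{\tau_{k}}(x_{0})$ lie in $V$ and converge to $\pi(\tau)$ for every $\tau\in I^{\infty}$, so $A\subset\overline{V}$.

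The key geometric step is to prove that $C\cap f_{i}(V)=\emptyset$ for every $i\in I$. For $j\neq i$, if there were $z\in f_{j}(A)\cap f_{i}(V)$, then $z$ would be a point of the ambient-open set $f_{i}(V)$ lying in $\overline{f_{j}(V)}$ (using $f_{j}(A)\subset\overline{f_{j}(V)}$), forcing $f_{i}(V)\cap f_{j}(V)\neq\emptyset$ and contradicting OSC. Hence $f_{j}(A)\cap f_{i}(V)=\emptyset$ whenever $j\neq i$. Every element of $C$ lies in some $f_{j}(A)\cap f_{k}(A)$ with $j\neq k$, and at least one of $j,k$ differs from $i$, so $C\cap f_{i}(V)=\emptyset$.

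Finally I would show that no $x\in V\cap A$ belongs to $\partial A$. Otherwise there would be a sequence $y_{n}\to x$ with $y_{n}\in F^{-k_{n}}(C)$; openness of $V$ makes $y_{n}\in V$ eventually, and by definition there is a word $\sigma$ of length $k_{n}$ with $(f_{\sigma_{k_{n}}}\circ\cdots\circ f_{\sigma_{1}})(y_{n})\in C$. Iterating $f_{i}(V)\subset V$ places this image in $f_{\sigma_{k_{n}}}(V)$, contradicting the previous step. Thus $V\cap A$ is disjoint from $\partial A$, giving $A\neq\partial A$, and Theorem~\ref{thm:ZM}(ii) closes the argument. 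The principal obstacle is the openness-meets-closure argument of the third paragraph: it exploits in an essential way that similitudes of $\mathbb{R}^{n}$ extend to homeomorphisms of the ambient space, so each $f_{i}(V)$ is open in $\mathbb{R}^{n}$. This is precisely what the restriction to similitudes in $\mathbb{R}^{n}$ purchases, and it is why the result is not claimed in the generality of the IFSs considered elsewhere in the paper.
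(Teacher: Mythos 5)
Your argument is correct, but it takes a genuinely different route from the paper: the paper offers no proof at all for Theorem~\ref{thm:DB}, simply citing Theorem~2.1 of Mor\'an and Rey \cite{MR}, whereas you derive it internally from Theorem~\ref{thm:ZM}(ii) by showing that the hypotheses force $A$ to be non-overlapping. Each step of your reduction checks out: Schief's theorem legitimately upgrades the OSC to the strong OSC for similitudes on $\mathbb{R}^{n}$; the chain $A\subset\overline{V}$, hence $f_{j}(A)\subset\overline{f_{j}(V)}$ (using that a similitude is a homeomorphism of the ambient space, so it commutes with closure), hence $f_{j}(A)\cap f_{i}(V)=\emptyset$ for $j\neq i$, correctly yields $C\cap f_{i}(V)=\emptyset$ for every $i$; and since the paper's $F^{-k}(C)$ consists exactly of those $y\in A$ admitting a length-$k$ word whose composition sends $y$ into $C$, the forward-invariance $f_{i}(V)\subset V$ does show $F^{-k}(C)\cap V=\emptyset$ for all $k\geq1$, so that $V\cap A$ is a nonempty open (in $A$) set avoiding $\partial A$, giving $A\neq\partial A$; finally $C=F^{0}(C)\subset\widehat{C}$ closes the loop. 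What your approach buys is a self-contained proof that also establishes the stronger structural fact that OSC implies non-overlapping for similitudes in $\mathbb{R}^{n}$ (so that the Remark following Theorem~\ref{thm:ZM} and Theorem~\ref{thm:DB} become two faces of the same statement), and it yields $\mu_{p}(\widehat{C})=0$ rather than merely $\mu_{p}(C)=0$; the cost is the dependence on Schief's nontrivial theorem, which is external to the paper just as the Mor\'an--Rey citation is, so neither route is elementary, but yours integrates better with the paper's own machinery.
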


\subsection{Continuity and Measure Preserving Properties of Fractal
Transformations}

\label{sec:MP}

The main results of this subsection are that fractal transformations between
non-overlapping attractors are measurable, continuous almost everywhere, and
map $p$-measures to $p$-measures.

\begin{theorem}
\label{tauthm}Let $F=\{A;f_{1},f_{2},...,f_{N}\}$ be an IFS with
non-overlapping attractor $A$ and invariant measure $\mu$. The top section of
$\tau:A\rightarrow I^{\infty}$ is measureable and continuous almost everywhere
w.r.t. $\mu$, for all $p$.
\end{theorem}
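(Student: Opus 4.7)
The plan is to exploit the fact, coming from Proposition~\ref{prop:single} together with Theorem~\ref{thm:ZM}(ii), that the set $\widehat{C}$ of points with non-unique address carries no $\mu$-mass. On the complement $A\setminus\widehat{C}$ the top section $\tau$ coincides with the single-valued inverse of the coding map $\pi$, so establishing almost everywhere continuity reduces to showing continuity of $\tau$ at every uniquely-addressed point.

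For this I would fix $x\in A\setminus\widehat{C}$ and a sequence $x_n\to x$ in $A$. Since $I^{\infty}$ is compact, it suffices to show that every cluster point of $\{\tau(x_n)\}$ equals $\tau(x)$. If $\tau(x_{n_k})\to\theta$ along some subsequence, continuity of $\pi$ gives
\[
\pi(\theta)=\lim_k\pi(\tau(x_{n_k}))=\lim_k x_{n_k}=x,
\]
so $\theta\in\pi^{-1}(x)=\{\tau(x)\}$ and hence $\theta=\tau(x)$. Thus the set of discontinuities of $\tau$ is contained in $\widehat{C}$, which is $\mu$-null by Theorem~\ref{thm:ZM}(ii).

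Measurability I would obtain by a coordinate description. Recalling that $1>2>\cdots>N$ in the paper's lexicographic order, the first letter of the top address is $\tau_1(x)=\min\{i:x\in f_i(A)\}$; its level set $\tau_1^{-1}(i)=f_i(A)\setminus\bigcup_{j<i}f_j(A)$ is Borel, since each $f_i(A)$ is compact. Because $f_{\tau_1(x)}$ is a homeomorphism onto its image, the tail $\tau\bigl(f_{\tau_1(x)}^{-1}(x)\bigr)$ is again a top address, yielding the recursion
\[
\tau_{k+1}(x)=\tau_k\bigl(f_{\tau_1(x)}^{-1}(x)\bigr),\qquad k\geq 1.
\]
An induction on $k$, using that each $f_i^{-1}$ is continuous on $f_i(A)$, shows that every coordinate $\tau_k:A\to\{1,\dots,N\}$ is Borel measurable; since cylinder sets generate the Borel $\sigma$-algebra on $I^{\infty}$, $\tau$ itself is Borel measurable.

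The substantive obstacle is to know that $\widehat{C}$ is $\mu$-null for \emph{every} probability vector $p$, and Theorem~\ref{thm:ZM}(ii) already supplies this. Once that is in hand, the compactness argument for continuity and the recursion for measurability are essentially formal, and I do not expect further structural hypotheses on $F$ to be needed.
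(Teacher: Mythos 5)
Your proof is correct and follows essentially the same route as the paper: the continuity argument (compactness of $I^{\infty}$ plus uniqueness of addresses off $\widehat{C}$, with Theorem~\ref{thm:ZM}(ii) supplying the null set) is identical, and your coordinatewise recursion for measurability produces exactly the paper's Borel level sets $f_{\sigma_1}\circ\cdots\circ f_{\sigma_k}(A)\setminus\bigcup_{\theta<\sigma}f_{\theta_1}\circ\cdots\circ f_{\theta_k}(A)$, merely packaged as generation by cylinders rather than as a uniform limit of simple functions.
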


\begin{proof}
We first prove that $\tau:$ $A\rightarrow I^{\infty}$ is measureable by
showing that $\tau_{F}$ is the uniform limit of a sequence of simple functions
whose maximal sets upon which $\tau$ has constant value are Borel sets. Define
the sequence of simple functions $\tau^{(k)}:A\rightarrow I^{\infty}$ for
$k\in\mathbb{N}$ by
\[
\tau^{(k)}(x)=\tau(x)|_{k}\overline{1}%
\]
for all $x\in A$, where $\overline{1}:=111\cdots$ and $\sigma|_{k}:=\sigma
_{1}\cdots\sigma_{k}$. The sequence $\{\tau^{(k)}\}_{k\in\mathbb{N}}$
converges uniformly to $\tau$ because $d(\tau^{(k)}(x),\tau(x))\leq2^{-k}$; in
fact $\tau(x)=\sup\{\tau^{(k)}(x):k\in\mathbb{N\}}$. To show that $\tau$ is
measurable, it now suffices to show that the maximal subsets of $A$ on which
$\tau^{(k)}(x)$ is constant, namely
\[
D_{\sigma_{1}...\sigma_{k}}:=\{x\in A:\tau^{(k)}(x)=\sigma_{1}...\sigma
_{k}\overline{1}\}\text{,}%
\]
are Borel sets. This is established by showing, by induction, that
\[
D_{\sigma_{1}...\sigma_{k}}:=f_{\sigma_{1}}\circ f_{\sigma_{2}}\circ\dots\circ
f_{\sigma_{k}}(A)\backslash\{f_{\theta_{1}}\circ f_{\theta_{2}}\circ\dots\circ
f_{\theta_{k}}(A):\theta_{1}...\theta_{k}<\sigma_{1}...\sigma_{k}\}.
\]
That is, the largest set on which $\tau^{(k)}(x)$ is constant is exactly
$D_{\pi(x)|k}$. Each of the sets $f_{\theta_{1}}\circ f_{\theta_{2}}\circ
\dots\circ f_{\theta_{k}}(A)$ is a Borel set, so $D_{\sigma_{1}...\sigma_{k}}$
is too.

To prove continuity, let $D=A\setminus\widehat{C}$, which is, by
Proposition~\ref{prop:single} is the set of points with exactly one address.
Let $x\in D$ and assume, by way of contradiction, that there is a sequence of
points $\{x_{n}\}$ such that $x_{n}\rightarrow x$, but $\tau(x_{n}%
)\nrightarrow\tau(x)$. Using the notation $\sigma:=\tau(x)$ and $\omega
_{n}:=\tau(x_{n})$, we have $x_{n}\rightarrow x$, but $\omega_{n}%
\nrightarrow\sigma$. Since code space is compact, by going to a subsequence if
needed, we may assume that $\omega_{n}\rightarrow\omega\neq\sigma$. Now
\[
\pi(\sigma)=\pi\circ\tau(x)=x=\lim_{n\rightarrow\infty}x_{n}=\lim
_{n\rightarrow\infty}\pi\circ\tau(x_{n})=\lim_{n\rightarrow\infty}\pi
(\omega_{n})=\pi(\omega),
\]
the last equality following from the continuity of the coding map $\pi$. This
implies that $\omega\neq\sigma$ are both addresses of $x$, which is a
contradiction because $x\in D$ has exactly one address.
\end{proof}

For an IFS $F$, let
\[
\Gamma_{F}=\pi_{F}^{-1}(\widehat{C}_{F}).
\]
Consider two non-overlapping IFSs $F$ and $G$ with the same probability
vector. With notation as in the Definition~\ref{def:FT} of fractal
transformation, let
\[
\begin{aligned}\Gamma_{\{F,G\}} &= \Gamma_F \cup \Gamma_G \\
\Lambda_{\{F,G\}} &= I^{\infty} \setminus \Gamma_{\{F,G\}} \\
A_F^0 &= \pi_F (\Lambda_{\{F,G\}}  ) \qquad \text{and} \qquad
A_G^0 =  \pi_G (\Lambda_{\{F,G\}}  ) \\
A_F^1 &= A_F \setminus A_F^0 \qquad \quad \text{and} \qquad
A_G^1 = A_G \setminus A_G^0
\end{aligned}
\]
Note that $A_{F}^{0}$ depends also on $G$ and that $A_{G}^{0}$ depends also on
$F$; similar for $A_{F}^{1}$ and $A_{G}^{1}$.

\begin{lemma}
\label{lem:bi} With notation as above

\begin{enumerate}
\item $\mu_{F}(A_{F}^{1}) = \mu_{G}(A_{G}^{1}) = 1$,

\item The fractal transformation $T_{FG}$ maps $A_{F}^{1}$ bijectively onto
$A_{G}^{1}$, and maps $A_{F}^{0}$ into $A_{G}^{0}$.

\item Restricted to $A_{F}^{1}$ we have $(T_{FG})^{-1} = T_{GF}$; hence
$(T_{FG})^{-1} = T_{GF}$ almost everywhere.
\end{enumerate}
\end{lemma}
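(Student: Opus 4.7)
The plan is to transfer all questions to the code space $I^{\infty}$, where things reduce to statements about the Bernoulli measure $\nu_{p}$ and the behaviour of $\pi_{F}$ and $\pi_{G}$ on the full-measure subset where each is injective. The two bridges I will lean on throughout are Lemma~\ref{lem:ZF} (which identifies $\mu_{F}$ with the pushforward of $\nu_{p}$ under $\pi_{F}$) and Proposition~\ref{prop:single} (which says $\widehat{C}_{F}=\{x:|\pi_{F}^{-1}(x)|\neq 1\}$).

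For part~(1), I would first show $\nu_{p}(\Lambda_{\{F,G\}})=1$. By Lemma~\ref{lem:ZF} applied to $F$,
\[
\nu_{p}(\Gamma_{F}) = \nu_{p}\bigl(\pi_{F}^{-1}(\widehat{C}_{F})\bigr) = \mu_{F}(\widehat{C}_{F}) = 0,
\]
where the last equality is Theorem~\ref{thm:ZM}(ii); the same argument with $G$ in place of $F$ gives $\nu_{p}(\Gamma_{G})=0$, whence $\nu_{p}(\Lambda_{\{F,G\}})=1$. Because every $\sigma\in\Lambda_{\{F,G\}}$ avoids $\Gamma_{F}$, Proposition~\ref{prop:single} implies that $\pi_{F}$ is injective on $\Lambda_{\{F,G\}}$ and, more strongly, that $\pi_{F}^{-1}(\pi_{F}(\Lambda_{\{F,G\}}))=\Lambda_{\{F,G\}}$. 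A second application of Lemma~\ref{lem:ZF} then gives $\mu_{F}(\pi_{F}(\Lambda_{\{F,G\}}))=1$, and the analogous statement for $G$ holds by symmetry, yielding the measure assertion.

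For part~(2), take any $x$ in the full-measure subset of $A_{F}$, so that its unique $F$-address $\sigma=\tau_{F}(x)$ lies in $\Lambda_{\{F,G\}}$. Then $T_{FG}(x)=\pi_{G}(\sigma)$, and because $\sigma\notin\Gamma_{G}$, Proposition~\ref{prop:single} again shows that $\sigma$ is the unique $G$-address of $\pi_{G}(\sigma)$, so this image lies in the full-measure subset of $A_{G}$. Injectivity of $T_{FG}$ on this set reduces to injectivity of $\pi_{G}$ on $\Lambda_{\{F,G\}}$, and surjectivity onto the full-measure subset of $A_{G}$ is symmetric. That $T_{FG}$ sends the complementary subset into the complementary subset of $A_{G}$ follows from the same chasing, since $T_{FG}(x)\in\pi_{G}(\Lambda_{\{F,G\}})$ whenever $x\in\pi_{F}(\Lambda_{\{F,G\}})$. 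For part~(3), on the full-measure set the composition collapses,
\[
T_{GF}(T_{FG}(x)) = \pi_{F}\bigl(\tau_{G}(\pi_{G}(\sigma))\bigr) = \pi_{F}(\sigma) = x,
\]
where $\tau_{G}(\pi_{G}(\sigma))=\sigma$ because $\sigma$ is the unique $G$-address of $\pi_{G}(\sigma)$; combined with part~(1) this gives $(T_{FG})^{-1}=T_{GF}$ almost everywhere.

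The main obstacle is the single measure-push step in part~(1). Since $\pi_{F}$ is not globally injective one cannot directly identify $\mu_{F}\circ\pi_{F}$ with $\nu_{p}$; the point of inserting $\Gamma_{G}$ into the definition of $\Gamma_{\{F,G\}}$ (beyond what is needed by $F$ alone) is only to make parts~(2) and~(3) work, while the saturation identity $\pi_{F}^{-1}(\pi_{F}(\Lambda_{\{F,G\}}))=\Lambda_{\{F,G\}}$ for part~(1) depends only on $\Lambda_{\{F,G\}}$ avoiding $\Gamma_{F}$. A minor secondary point is the measurability of $\pi_{F}(\Lambda_{\{F,G\}})$, which I would handle by writing it as $\tau_{F}^{-1}(\Lambda_{\{F,G\}})\cap(A_{F}\setminus\widehat{C}_{F})$ and invoking Theorem~\ref{tauthm}. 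Once this one identification is pinned down, parts~(2) and~(3) are essentially address bookkeeping.
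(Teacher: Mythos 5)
Your proposal follows essentially the same route as the paper's proof: transfer everything to code space via Lemma~\ref{lem:ZF}, show $\nu_{p}(\Gamma_{\{F,G\}})=0$ using Theorem~\ref{thm:ZM}, and do the address bookkeeping with Proposition~\ref{prop:single}; your saturation identity $\pi_{F}^{-1}(\pi_{F}(\Lambda_{\{F,G\}}))=\Lambda_{\{F,G\}}$ sharpens the paper's mere inclusion $\pi_{F}^{-1}(\pi_{F}(\Lambda_{\{F,G\}}))\supseteq\Lambda_{\{F,G\}}$, and your measurability remark (writing the image as $\tau_{F}^{-1}(\Lambda_{\{F,G\}})$ and invoking Theorem~\ref{tauthm}) fills a point the paper leaves implicit. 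One small repair: the assertion that $T_{FG}$ maps the exceptional set into the exceptional set does not follow, as you state, from $T_{FG}(x)\in\pi_{G}(\Lambda_{\{F,G\}})$ whenever $x\in\pi_{F}(\Lambda_{\{F,G\}})$; instead, for $x$ outside $\pi_{F}(\Lambda_{\{F,G\}})$ one has $\tau_{F}(x)\in\Gamma_{\{F,G\}}$, and $\pi_{G}(\tau_{F}(x))$ cannot lie in $\pi_{G}(\Lambda_{\{F,G\}})$ because every point of the latter has a unique $G$-address, which lies in $\Lambda_{\{F,G\}}$.
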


\begin{proof}
Using Lemma~\ref{lem:ZF} and Theorem~\ref{thm:ZM} we have $\mu(\Gamma_{F}%
)=\mu(\pi_{F}^{-1}\widehat{C}_{F})=\mu_{F}(\widehat{C}_{F})=0$. This implies
that $\mu(\Gamma_{\{F,G\}})=0$ or $\mu(\Lambda_{\{F,G\}})=1$. Again using
Lemma~\ref{lem:ZF} we have $\mu_{F}(A_{F}^{1})=\mu_{F}(\pi_{F}(\Lambda
_{\{F,G\}}))=\mu(\pi_{F}^{-1}\pi_{F}(\Lambda_{\{F,G\}}))\geq\mu(\Lambda
_{\{F,G\}})=1$. This proves statement (1).

Concerning statement (2), by Proposition~\ref{prop:single}, we know that
$\pi_{F}^{-1}=\tau_{F}$ is single-valued on $A_{FG}$. Now $\tau_{F}$ takes
$A_{F}^{1}$ bijectively onto $\Lambda_{\{F,G\}})$ and $\pi_{G}$ takes
$\Lambda_{\{F,G\}}$ bijectively onto $A_{G}^{1}$. Similarly, $\tau_{F}$ takes
$A_{F}^{0}$ into $\Gamma_{\{F,G\}})$ and $\pi_{G}$ takes $\Gamma_{\{F,G\}}$
into $A_{G}^{0}$.

Concerning statement (3), restricted to $A_{GF}$ we have $T_{FG} \circ T_{GF}
= \pi_{G}\circ(\tau_{F} \circ\pi_{F} ) \circ\tau_{G} = \pi_{G}\circ\tau_{G} =
I$, the identity.
\end{proof}

\begin{theorem}
\label{thm:MP} Assume that both $A_{F}$ and $A_{G}$ are non-overlapping, and
let $\mu_{F}$ and $\mu_{G}$ be invariant measures associated with the same
probability vector. Then

\begin{enumerate}
\item $T_{FG}:A_{F}\rightarrow A_{G}$ is measurable and continuous a.e. with
respect to $\mu_{F}$;

\item $\mu_{F}\circ T_{GF}=\mu_{G}$ and $\mu_{G}\circ T_{FG}=\mu_{F}$.
\end{enumerate}
\end{theorem}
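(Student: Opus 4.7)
The plan is to write $T_{FG} = \pi_G \circ \tau_F$ and leverage the immediately preceding Theorem~\ref{tauthm} together with Lemma~\ref{lem:ZF} as the principal tools.

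Part (1) should fall out of Theorem~\ref{tauthm} with essentially no extra work. Measurability of $T_{FG}$ is automatic: the top section $\tau_F$ is Borel measurable by Theorem~\ref{tauthm}, and $\pi_G$ is continuous (hence Borel), so their composition is Borel. For a.e.\ continuity, Theorem~\ref{tauthm} gives that $\tau_F$ is continuous at every point of $A_F \setminus \widehat{C}_F$, a set of full $\mu_F$-measure by Theorem~\ref{thm:ZM}(ii); since $\pi_G$ is continuous on all of $I^{\infty}$, the composite $T_{FG}$ inherits continuity on $A_F \setminus \widehat{C}_F$.

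For part (2), which I read as the push-forward identity $\mu_F(T_{FG}^{-1}(B)) = \mu_G(B)$ for every Borel set $B \subset A_G$, the plan is to lift the calculation from the attractors up to the code space. Using Lemma~\ref{lem:ZF} twice and the defining identity $T_{FG} \circ \pi_F = \pi_G \circ \tau_F \circ \pi_F$, I would write
\[
\mu_F(T_{FG}^{-1}(B)) \;=\; \nu_p\!\left(\pi_F^{-1}(T_{FG}^{-1}(B))\right) \;=\; \nu_p\!\left((T_{FG}\circ\pi_F)^{-1}(B)\right)
\]
and then show that $T_{FG}\circ\pi_F$ agrees with $\pi_G$ outside the set $\Gamma_F = \pi_F^{-1}(\widehat{C}_F)$. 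Indeed, for $\sigma \notin \Gamma_F$ the fiber $\pi_F^{-1}(\pi_F(\sigma))$ is the singleton $\{\sigma\}$, forcing $\tau_F(\pi_F(\sigma)) = \sigma$ and hence $(T_{FG}\circ\pi_F)(\sigma) = \pi_G(\sigma)$. Combining Lemma~\ref{lem:ZF} with Theorem~\ref{thm:ZM}(ii) gives $\nu_p(\Gamma_F) = \mu_F(\widehat{C}_F) = 0$, so $(T_{FG}\circ\pi_F)^{-1}(B)$ and $\pi_G^{-1}(B)$ differ by a $\nu_p$-null set. A final application of Lemma~\ref{lem:ZF} gives $\mu_F(T_{FG}^{-1}(B)) = \nu_p(\pi_G^{-1}(B)) = \mu_G(B)$; exchanging the roles of $F$ and $G$ produces the second identity.

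The only real obstacle is bookkeeping the null sets correctly — specifically, confirming that $\tau_F \circ \pi_F$ is the identity off a $\nu_p$-null set, which is exactly the content of Proposition~\ref{prop:single} combined with Theorem~\ref{thm:ZM}(ii). Once this is isolated, both statements reduce to a one-line change of variables via Lemma~\ref{lem:ZF}, and no further structural ingredients (in particular, not the full strength of Lemma~\ref{lem:bi}) are needed for the measure-preserving conclusion.
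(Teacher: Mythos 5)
Your proposal is correct, and for part (1) it coincides with the paper's proof (compose Theorem~\ref{tauthm} with the continuity of $\pi_G$). For part (2) you reach the same conclusion by a noticeably leaner route. The paper's proof splits a Borel set $B\subset A_G$ into $B^0=B\cap A_G^0$ and $B^1=B\cap A_G^1$ using the full apparatus of Lemma~\ref{lem:bi}, and then computes the measure of the \emph{forward} image $T_{GF}(B)=T_{GF}(B^0)\cup T_{GF}(B^1)$, discarding the $B^0$ piece as null and evaluating the $B^1$ piece via $\mu(\tau_G B^1)$ in code space. You instead work with the \emph{preimage} $T_{FG}^{-1}(B)$, lift once through Lemma~\ref{lem:ZF}, and observe that $T_{FG}\circ\pi_F$ agrees with $\pi_G$ off the single null set $\Gamma_F=\pi_F^{-1}(\widehat{C}_F)$ (Proposition~\ref{prop:single} plus Theorem~\ref{thm:ZM}(ii)), so the two preimages differ by a $\nu_p$-null set. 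This buys two things: you genuinely do not need Lemma~\ref{lem:bi} or the set $\Gamma_G$, and you avoid the measurability question that the forward-image formulation quietly raises (images of Borel sets under a non-injective Borel map need not be Borel, whereas your preimages are automatically Borel by part (1)). The one point worth making explicit is that you have proved the pushforward identity $\mu_F(T_{FG}^{-1}(B))=\mu_G(B)$ rather than the literal reading $\mu_F(T_{GF}(B))=\mu_G(B)$ of the statement; by Lemma~\ref{lem:bi} the sets $T_{GF}(B)$ and $T_{FG}^{-1}(B)$ coincide inside $A_F^1$ and both lie in the null set $A_F^0$ outside it, so the two formulations are equivalent, but a sentence saying so would close the loop.
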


\begin{proof}
Since $T_{FG} = \pi_{G}\circ\tau_{F}$, statement (1) follows from the
continuity of $\pi_{G}:I^{\infty}\rightarrow A_{G}$ and Theorem \ref{tauthm}.

Concerning statement (2), let $B$ be a Borel set in $A_{G}$, and let $B^{0} =
B \cap A_{G}^{0}, \, B^{1} = B \cap A_{G}^{1}$. By Lemma~\ref{lem:ZF} and
Lemma~\ref{lem:bi}
\[
\mu_{G}(B) = \mu(\pi_{G}^{-1}B) = \mu(\pi_{G}^{-1} (B^{0} \cup B^{1}) )=
\mu(\pi_{G}^{-1} B^{0}) + \mu(\pi_{G}^{-1} \, B^{1}). = \mu(\tau_{G} B^{1}),
\]
the last equality because $\pi_{G}^{-1} (B^{0}) = \tau_{G} ( B^{0})$, which
has measure zero.

By similar arguments
\[
\mu_{F}(T_{GF}\,B)=\mu_{F}(T_{GF}(B^{0}\cup B^{1}))=\mu_{F}(T_{GF}\,B^{0}%
)+\mu_{F}(T_{GF}\,B^{1})=\mu(\pi_{F}^{-1}\circ\pi_{F}\circ\tau_{G}(B^{1}%
))=\mu(\tau_{G}\,B_{1}),
\]
the second to last equality because $T_{GF}(B^{0})\subset A_{F}^{0}$, which
has measure zero.
\end{proof}

\section{Examples of Fractal Transformations}

\label{sec:examples}

\begin{example}
(Koch curve) \label{ex1}
\end{example}

Let
\begin{align*}
F &  =\{\mathbb{R};f_{1}=\frac{1}{2}-\frac{x}{2},f_{2}=1-\frac{x}{2}\},\\
G &  =\{\mathbb{R}^{2};g_{1}=(\frac{x}{2}+\frac{y}{2\sqrt{3}}-1,\frac
{x}{2\sqrt{3}}-\frac{y}{2}),g_{2}=(\frac{x}{2}-\frac{y}{2\sqrt{3}}+1,-\frac
{x}{2\sqrt{3}}-\frac{y}{2})\}.
\end{align*}
Then $A_{F}=[0,1]$ while $A_{G}$ is a segment of a Koch snowflake curve. In
this case both $T_{FG}$ and $T_{GF}$ are homeomorphisms, because
\[
\{\pi_{F}^{-1}(x):x\in A_{F}\}=\{\pi_{F}^{-1}(x):x\in A_{F}\}\text{.}%
\]
Also%
\[
T_{FG}=T_{GF}^{-1}\text{.}%
\]
If $p_{1}=p_{2}=0.5,$ then $\mu_{F}$ is uniform Lebesgue measure on $[0,1]$.
The pushfoward of $\mu_{F}$ to $A_{G}$ under $T_{FG}$ is the uniform measure
$\mu_{G}$ on $A_{G}$ that uniquely obeys $\mu_{G}(\mathcal{B})=$ $(\mu
_{G}(g_{1}^{-1}(\mathcal{B)})+\mu_{G}(g_{2}^{-1}(\mathcal{B)}))/2$ for all
Borel subsets $\mathcal{B}$ of $A_{G}$. (We remark that the measure of any
Borel subset $\mathcal{B}$ of $A_{G}$ may be computed by, and thought of in
terms of, the chaos game algorithm on $G$ with equal probabilities,
\cite{elton}.) The Hausdorff dimensions of $A_{F}$ and $A_{G}$ are $1$ and
$2\ln2/\ln3$, respectively: thus, a fractal transformation may change the
dimension of a set upon which it acts.

\begin{example}
[Length preserving fractal transformation of the unit interval]Let $F =\{
([0,1] ; \, f_{1}, f_{2} \}$ and $G =\{ ([0,1] ; \, g_{1}, g_{2} \}$, where
\[
\begin{aligned} f_1(x) &= r\,x, & f_2(x) &= (1-r)x + r \\ g_1(x) &= r\, x + (1-r), & g_2(x) &= (1-r)\,x, \end{aligned}
\]
and $0 < r < 1$. The probability vector is $p = (r, 1-r)$, so that the
invariant measure for both $F$ and $G$ is Lebesque measure. By
Theorem~\ref{thm:MP}, the fractal transformation $T_{FG} : [0,1]
\rightarrow[0,1]$ preserves length. This example can be generalized from $2$
to $N$ functions as long as the scaling factors of $f_{i}$ and $g_{i}$ are the
same, say $r_{i}$, for all $i$, and the probability vector $p = (p_{1}, p_{2},
\dots, p_{N})$ satisfies $p_{i} = r_{i}$ for all $i$.
\end{example}

\begin{example}
[Self mappings of the interval]\label{ex:1} If
\begin{align*}
F &  =\left\{  \mathbb{R};\;f_{1}=\frac{x}{2},\;f_{2}=\frac{x}{2}+\frac{1}%
{2}\right\}  ,\\
G_{1} &  =\left\{  \mathbb{R};\;g_{1}=-\frac{x}{2}+\frac{1}{2},\;g_{2}%
=\frac{x}{2}+\frac{1}{2}\right\}  ,\\
G_{2} &  =\left\{  \mathbb{R};\;g_{1}=-\frac{x}{2}+\frac{1}{2},\;g_{2}%
=-\frac{x}{2}+1\right\}  ,\\
G_{3} &  =\left\{  \mathbb{R};\;g_{1}=\frac{x}{2},\;g_{2}=-\frac{x}%
{2}+1\right\}  ,
\end{align*}
then $A_{F}=A_{G_{i}}=[0,1]$ for $i=1,2,3$. All three fractral transformations
$T_{FG_{i}},\,i=1,2,3$, are continuous at all points of $A_{F}^{1}%
=[0,1]\backslash\widehat{C}$ where $\widehat{C}$ is the diadic set
\[
\widehat{C}=\left\{  \frac{k}{2^{n}}:k=0,1,...,2^{n};n\in\mathbb{N}\right\}
\text{.}%
\]
Indeed, $T_{FG_{i}},\,i=1,2,3$, is a homeomorphism when restricted to
$[0,1]\backslash\widehat{C}$. Moreover, $T_{FG_{i}},\,i=1,2,3$, are continuous
from the left at all points in $(0,1]$. If we choose $p_{1}=p_{2}=0.5$, then
the measures $\mu_{F}=\mu_{G_{i}},\,i=1,2,3$, are all the Lebesgue measure on
$[0,1]$. The graph of the function $T_{FG_{1}}$ appears in Figure \ref{tfg02},
and the graph of $T_{FG_{2}}$ appears in Figure \ref{2ndfsin01}.

It can be shown by a symmetry argument that $T_{FG_{2}}$ is its own inverse,
i.e., $T_{FG_{2}}\circ T_{FG_{2}}=id,$ the identity, a.e. This is not obvious
from the definition of $T_{FG_{2}}$ which can be stated by expressing
$x\in\lbrack0,1]$ in binary representation: if
\[
x=\sum_{n=1}^{\infty}d_{n}/2^{n},\quad d_{n}\in\{0,1\},
\]
then
\[
T_{FG_{2}}(x)=\sum_{n=1}^{\infty}(-1)^{n-1}(d_{n}+1)/2^{n}.
\]

\end{example}

%

\begin{figure}[ptb]%
\centering
\fbox{\includegraphics[
natheight=3.413400in,
natwidth=3.413400in,
height=2.0358in,
width=2.0358in
]%
{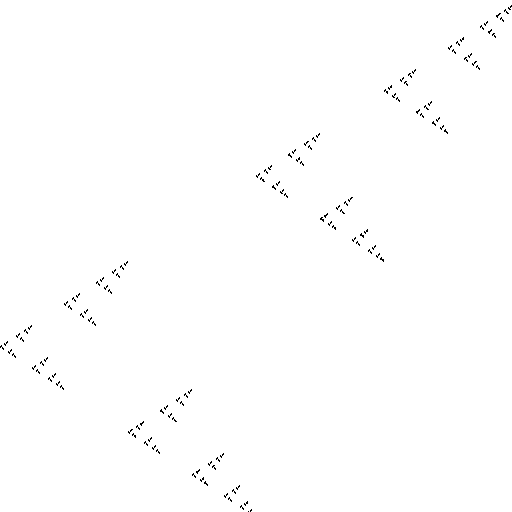}%
}\caption{Graph of the fractal transformation $T_{FG_{1}}$ discussed in
Example \ref{ex:1}. This provides a Lebesgue measure-preserving transformation
on [0,1] that is continuous a.e. but has a dense countable set of
discontinuities. This transformation, and others like it, provide unitary
transformations on $\mathcal{L}^{2}[0,1]$ and "fractal Fourier series", see
Section \ref{sec:unitary}. The viewing window is slightly larger than
[0,1]$\times$[0,1].}%
\label{tfg02}%
\end{figure}
\begin{figure}[ptb]%
\centering
\fbox{\includegraphics[
natheight=3.413400in,
natwidth=3.413400in,
height=2.0358in,
width=2.0358in
]%
{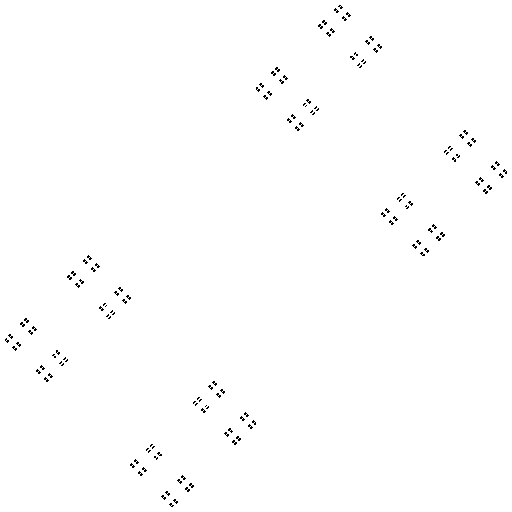}%
}\caption{Graph of the fractal transformation $T_{FG_{2}}$ discussed in
Section \ref{ex:1}. Unlike $T_{FG_{1}}$ in Figure \ref{tfg02}, $T_{FG_{2}}$ is
its own inverse. }%
\label{2ndfsin01}%
\end{figure}

\begin{example}
[Hilbert's space filling curve]\label{ex:Hilbert}

Space filling curves, from the point of view of IFS theory, have been
considered in \cite{sagan}. In \cite{monthly} it is shown how, as follows,
functions such as the Hilbert mapping $h:[0,1]\rightarrow$ $[0,1]^{2}$ (see
Figure \ref{image}) are examples of fractal transformations.

Let $A=A_{1}=(0,0),B=B_{4}=(1,0),C=C_{3}=(1,1),D=D_{2}=(0,1),$ $B_{1}%
=A_{2}=(0,0.5),$ $C_{1}=B_{2}=A_{3}=D_{4}=(0.5,0.5),$ $D_{1}=C_{4}=(0.5,0),$
$C_{2}=D_{3}=(0.5,1),$ and $B_{3}=A_{4}=(1,0.5)$. Let
\begin{align*}
F  &  =\left\{  \mathbb{R};f_{i}=\frac{x+i-1}{4},i=1,2,3,4\right\}  ,\\
G  &  =\left\{  \mathbb{R}^{2};g_{i},i=1,2,3,4\right\}
\end{align*}
where $g_{i}:\mathbb{R}^{2}\rightarrow\mathbb{R}^{2}$ is the unique affine
transformation such that $g_{i}(ABCD)=A_{i}B_{i}C_{i}D_{i}$, by which we mean
$g_{i}(A)=A_{i}$, $g_{i}(B)=B_{i},g_{i}(C)=C_{i},g_{i}(D)=D_{i}$ for
$i=1,2,3,4$. (Similar notation will be used elsewhere in this paper.) The
Hilbert mapping is $h=T_{FG}:[0,1]\rightarrow\lbrack0,1]^{2},$ The functions
in $G$ were chosen to conform to the orientations of Figure \ref{image}, which
comes from Hilbert's paper \cite{hilbert} concerning Peano curves. One way to
prove that $T_{FG}$ is continuous is by using the standard theory of fractal
transformations; see for example \cite[Theorem 1]{monthly}.

If $p_{1}=p_{2}=p_{3}=p_{4}=0.25,$ then the associated invariant measure
$\mu_{F}$ is the Lebesgue measure on $[0,1]$, and $\mu_{G}$ is Lebesgue
measure on $[0,1]^{2}$. The inverse of $T_{FG}^{-1}$ is the fractal
transformation $T_{GF}:[0,1]^{2}\rightarrow\lbrack0,1]$, which is continuous
almost everywhere with respect to two dimensional Lebesgue measure. More
precisely, $T_{GF}\circ h(x)=x$ for almost all $x\in\lbrack0,1]$ (with respect
to Lebegue measure), and $h\circ T_{GF}(x)=x$ for all $x\in\lbrack0,1]^{2}$.
By Theorem~\ref{thm:MP}, the fractal transformation $h$ is Lebesque measure
preserving in that the $2$-dimensional Lebesque measure of the image $h(B)$ of
$B$ equals the $1$-dimensional Lebesque measure of $B$, for any Borel set
$B$.
\begin{figure}[ptb]%
\centering
\includegraphics[
natheight=0.953100in,
natwidth=2.352000in,
height=1.452in,
width=3.5405in
]%
{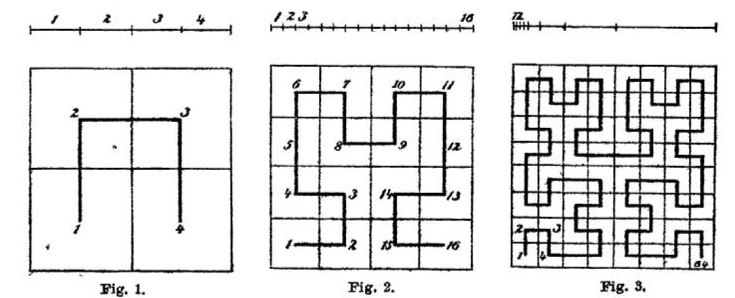}%
\caption{Hilbert's original design for a continuous map from [0,1] to
[0,1]$\times$[0,1].}%
\label{image}%
\end{figure}

\end{example}

\begin{example}
[Fractal transformations between the unit interval and a filled triangle]%
\label{ex4}

Let $A,B,C$ be non-colinear points in $\mathbb{R}^{2}$ and let $D$ be the
mid-point of the line segment $CA$. Let
\begin{align*}
F  &  =\left\{  \mathbb{R};f_{1}(x)=\frac{1}{2}\,x,\;f_{2}(x)=\frac{1}%
{2}\,x+\frac{1}{2}\,\right\}  ,\\
G  &  =\left\{  \mathbb{R}^{2};g_{1},\;g_{2}\right\}  ,
\end{align*}
where $g_{1}$ and $g_{2}$ are the unique affine maps on $\mathbb{R}^{2}$ such
that $g_{1}(ABC)=ADB$ and $g_{2}(ABC)=BDC$, respectively. The unique attractor
of $F$ is $A_{F}=$ $[0,1]\subset\mathbb{R}$, and the unique attractor of $G$
is and $A_{G}=\triangle$, the filled triangle with vertices at $ABC$. If
$p_{1}=p_{2}=0.5$ then $\mu_{F}$ is Lebesgue measure on $[0,1]$, and $\mu_{F}$
is Lebesgue measure on $\triangle$, It readily follows from \cite[Theorem
1]{monthly} that $T_{FG}:[0,1]\rightarrow\triangle$ is continuous and
$T_{FG}([0,1])=\triangle$. It is also readily shown that $T_{GF}%
:\triangle\rightarrow\lbrack0,1]$ is continuous almost everywhere with respect
to two-dimensional Lebesgue measure, with discontinuities located on a
countable set of boundaries of triangles. We have that $T_{FG}\circ
T_{GF}(x)=x$ for all $x\in\triangle$, and $T_{GF}\circ T_{FG}(x)=x$ for almost
all $x\in\lbrack0,1],$ with respect to one-dimensional Lebesgue measure. We
also have $T_{GF}(\triangle)\neq\lbrack0,1]$ but $\overline{T_{GF}(\triangle
)}=[0,1]$.
\end{example}

\begin{example}
[A family of fractal homeomorphisms on a triangular laminar]%
\label{ex:triangles} Let $\bigtriangleup$ denote a filled equilateral triangle
as illustrated in Figure~\ref{fig:triangles}. The IFS $F_{r},\;0<r\leq\frac
{1}{2},$ on $\bigtriangleup$ consists of the four affine functions as
illustrated in the figure on the left, where $\bigtriangleup$ is mapped to the
four smaller triangles so that points $A,B,C$ are mapped are mapped,
respectively, to points $a,b,c$. A probability vector is associated with $F$
such that the probability is proportional to the area of the corresponding
triangle. The IFS $G_{\lambda}$ is defined in exactly the same way, but
according to the figure on the right. The attractor of each IFS is
$\bigtriangleup$. (It is quite a subtle point, that there exists a metric,
equivalent to the Euclidean metric on $\mathbb{R}^{2},$ such that both IFSs
are contractive, see \cite{Akins}.) It is proved in \cite{BHR} that the
corresponding invariant measures $\mu_{F}$ and $\mu_{G}$ are both
2-dimensional Lebesque measure. By Theorem~\ref{thm:MP} and \cite[Theorem
1]{monthly}, or by \cite{BHR}$,$ the fractal transformation $T_{FG}^{r}$ is an
area-preserving homeomorphism of $\bigtriangleup$ for all $0<r\leq\frac{1}{2}%
$. See \cite{BHR} for related examples of volume-preserving fractal
homeomorphisms between tetrahedra.
\end{example}

%

\begin{figure}[ptb]%
\centering
\includegraphics[
trim=0.000000in 2.254340in 0.000000in 0.322332in,
natheight=4.966600in,
natwidth=7.019700in,
height=1.5177in,
width=4.4045in
]%
{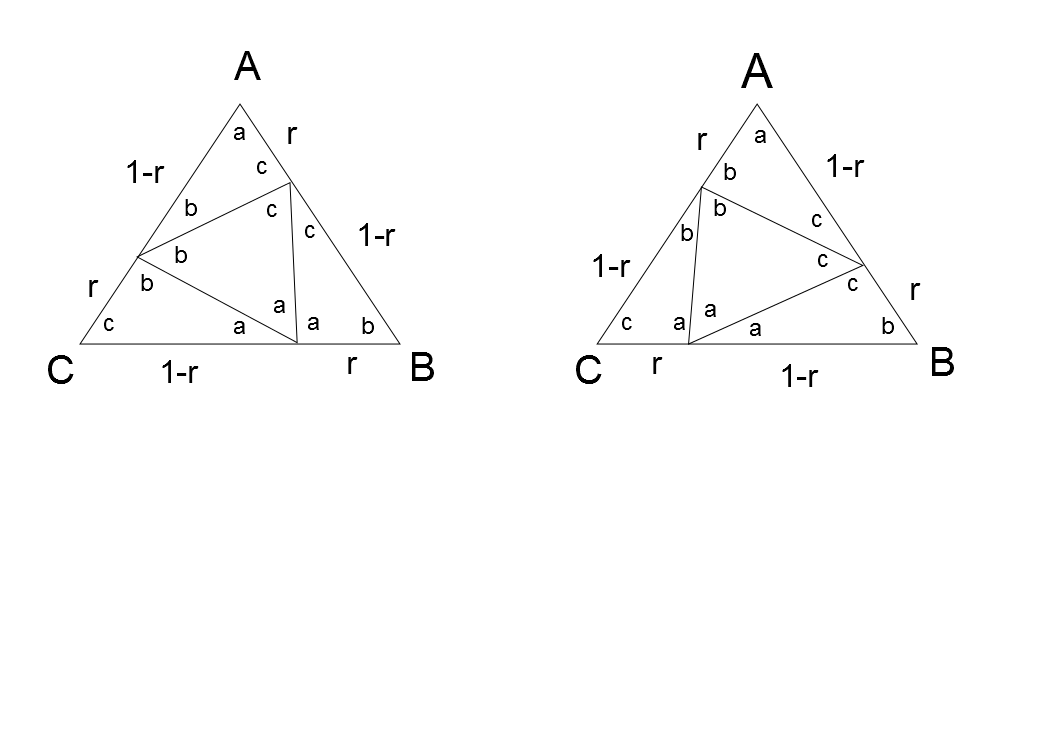}%
\caption{See Example \ref{ex:triangles}.}%
\label{fig:triangles}%
\end{figure}

\section{\label{sec:unitary} Isometries between Hilbert Spaces}

Given an IFS $F$ with attractor $A_{F}$ and an invariant measure $\mu_{F}$,
the Hilbert space $L_{F}^{2}=L^{2}(A_{F},\mu_{F})$ of complex-valued functions
on $A_{F}$ that are square integrable w.r.t. $\mu_{F}$ are endowed with the
inner product $<\cdot,\cdot>_{F}$ defined by
\[
\langle\psi_{F},\varphi_{F}\rangle_{F}=\int\limits_{A_{F}}\overline{\psi_{F}%
}\varphi_{F}\,d\mu_{F}\text{,}%
\]
for all $\psi_{F},\varphi_{F}\in L_{F}^{2}$. Functions that are
\textit{equivalent}, i.e., equal almost everywhere, will be considered the
same function in $L_{F}^{2}$.

\begin{definition}
\label{def:isometry} Given two IFSs $F$ and $G$ with the same number of
functions, with the same probabilities, with attractors $A_{F}$ and $A_{G}$
and invariant measures $\mu_{F}$ and $\mu_{G}$, respectively, let $T_{FG}$ and
$T_{GF}$ be the  fractal transformations. The \textbf{induced isometries}
$U_{FG}:L_{F}^{2}\rightarrow L_{G}^{2}$ and $U_{GF}:L_{G}^{2}\rightarrow
L_{F}^{2}$ are given by
\[
\begin{aligned} (U_{FG}\varphi_{F})(y)&=\varphi_{F}(T_{GF}(y)) \\ (U_{GF}\varphi_{G})(x)&=\varphi_{G}(T_{FG}(x)) \end{aligned}
\]
for all $x\in A_{F}$ and all $y\in A_{G}$. That these linear operators are
isometries is proved as part of Theorem~\ref{thm:isometry} below.
\end{definition}

\begin{theorem}
\label{thm:isometry} Under the conditions of Definition~\ref{def:isometry},

\begin{enumerate}
\item $U_{FG}:L_{F}^{2}\rightarrow L_{G}^{2}$ and $U_{GF}:L_{G}^{2}\rightarrow
L_{F}^{2}$ are isometries;

\item $U_{FG} \circ U_{GF}=id_{F}$ and $U_{GF}\circ U_{FG}=id_{G},$ the
identity maps on $L_{F}^{2}$ and $L_{G}^{2}$ respectively;

\item $\langle\psi_{G},U_{FG}\varphi_{F}r \rangle_{G} \, =\, \langle
U_{GF}\psi_{G},\varphi_{F}\rangle_{F}$ for all $\psi_{G}\in L_{G}^{2}$,
$\varphi_{F}\in L_{F}^{2}$.
\end{enumerate}
\end{theorem}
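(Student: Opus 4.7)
The plan is to derive all three claims from a single change-of-variables identity, which is itself a direct consequence of Theorem~\ref{thm:MP} and Lemma~\ref{lem:bi}. The crucial ingredient is the push-forward relation: for any bounded Borel function $h$ on $A_F$, one has
\[
\int_{A_G} h(T_{GF}(y))\,d\mu_G(y) \;=\; \int_{A_F} h(x)\,d\mu_F(x),
\]
and symmetrically with $F$ and $G$ interchanged. To justify this, I would first observe that $T_{GF}$ is Borel measurable by Theorem~\ref{thm:MP}(1), so $h\circ T_{GF}$ is measurable; the identity for indicator functions $h=\mathbf{1}_B$ is precisely $\mu_F(T_{GF}^{-1}B)=\mu_F(B)\cdot(\text{correction on a null set})$, and this is exactly what Theorem~\ref{thm:MP}(2) gives once one uses Lemma~\ref{lem:bi}(2)--(3) to identify $T_{GF}^{-1}(B)$ with $T_{FG}(B)$ modulo the $\mu_G$-null set $A_G^0$. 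The extension to simple functions, then to nonnegative measurable functions, then to $L^2$, is routine.

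With this change-of-variables identity in hand, claim (1) is immediate: taking $h=|\varphi_F|^2$ for $\varphi_F\in L_F^2$ gives
\[
\|U_{FG}\varphi_F\|_G^2 \;=\; \int_{A_G}|\varphi_F(T_{GF}(y))|^2\,d\mu_G(y) \;=\; \int_{A_F}|\varphi_F(x)|^2\,d\mu_F(x) \;=\; \|\varphi_F\|_F^2,
\]
and linearity of $U_{FG}$ is obvious. I also need to check that $U_{FG}$ is well defined on equivalence classes: if $\varphi_F=\widetilde\varphi_F$ $\mu_F$-a.e., then the set $N=\{x:\varphi_F(x)\ne\widetilde\varphi_F(x)\}$ is $\mu_F$-null, so $T_{GF}^{-1}(N)$ is $\mu_G$-null by the push-forward identity, making $\varphi_F\circ T_{GF}=\widetilde\varphi_F\circ T_{GF}$ $\mu_G$-a.e.

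For claim (2), I compute pointwise, for $\mu_F$-a.e. $x\in A_F$,
\[
\bigl(U_{GF}(U_{FG}\varphi_F)\bigr)(x) \;=\; (U_{FG}\varphi_F)(T_{FG}(x)) \;=\; \varphi_F\bigl(T_{GF}\circ T_{FG}(x)\bigr),
\]
and by Lemma~\ref{lem:bi}(3) we have $T_{GF}\circ T_{FG}=\mathrm{id}_{A_F}$ on the full-measure set $A_F^1$, so $U_{GF}\circ U_{FG}=\mathrm{id}_F$ in $L_F^2$; the other composition is identical with roles swapped. For claim (3), the change-of-variables identity applied to $h(x)=\overline{\psi_G(T_{FG}(x))}\varphi_F(x)$ yields
\[
\langle \psi_G, U_{FG}\varphi_F\rangle_G \;=\; \int_{A_G}\overline{\psi_G(y)}\,\varphi_F(T_{GF}(y))\,d\mu_G(y) \;=\; \int_{A_F}\overline{\psi_G(T_{FG}(x))}\,\varphi_F(x)\,d\mu_F(x) \;=\; \langle U_{GF}\psi_G,\varphi_F\rangle_F.
\]

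The only genuinely delicate point is bootstrapping the measure-preservation statement of Theorem~\ref{thm:MP}(2), which is phrased as an equality of set measures, into the integral change-of-variables formula used throughout. The cleanest route is to note that Lemma~\ref{lem:bi} provides a Borel bijection $T_{FG}\colon A_F^1\to A_G^1$ with Borel inverse $T_{GF}$, on full-measure Borel subsets; on these restricted spaces the push-forward relation is literally $(T_{GF})_*\mu_G=\mu_F$, and the usual abstract change-of-variables theorem applies. Outside $A_F^1$ and $A_G^1$ everything lives in null sets, so the resulting $L^2$-identities are unaffected. Once this measure-theoretic scaffolding is set up, (1)--(3) drop out as three lines of calculation.
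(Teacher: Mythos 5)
Your proposal is correct and follows essentially the same route as the paper: both reduce all three claims to the change-of-variables formula coming from the measure-preservation statement $\mu_G\circ T_{FG}=\mu_F$ of Theorem~\ref{thm:MP}(2) together with the a.e.\ inverse relation of Lemma~\ref{lem:bi}. You simply spell out the standard bootstrapping (indicators, simple functions, monotone limits) and the well-definedness of $U_{FG}$ on equivalence classes, details the paper leaves implicit.
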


\begin{proof}
(1) To show that the linear operators are isometries:
\begin{align*}
\left\Vert U_{FG}\varphi_{F}\right\Vert _{G}^{2}  &  =\int_{A_{G}}%
|U_{FG}\varphi_{F}|^{2}d\mu_{G}\\
&  =\int_{A_{G}}|\varphi_{F}\circ T_{GF}|^{2}d\mu_{G}\\
&  =\int_{A_{F}}|\varphi_{F}|^{2}d(\mu_{G}\circ T_{FG})\\
&  =\int_{A_{F}}|\varphi_{F}|^{2}d\mu_{F}=\left\Vert \varphi_{F}\right\Vert
_{F}^{2},
\end{align*}
the third equality from the change of variable formula and Lemma~\ref{lem:bi};
the fourth equality from statement (2) of Theorem~\ref{thm:MP}.

(2) From the definition of the induced isometries
\[
(U_{GF} \, U_{FG} (\varphi_{F})) (x) = \varphi_{F}( T_{GF} \, T_{FG} (x) ).
\]
But by Lemma~\ref{lem:bi}, the fractal transformations $T_{GF}$ and $T_{FG}$
are inverses of each other almost everywhere. Therefore the functions $U_{GF}
\, U_{FG} (\varphi_{F})$ and $\varphi_{F}$ are equal for almost all $x\in
A_{F}$.

(3) This is an exercise in change of variables, similar to the proof of (1).
\end{proof}

\begin{example}
[The Cantor function]\label{ex:CF} Consider the two IFS's $F=\{\mathcal{C}%
;\,\frac{1}{3}x,\frac{1}{3}x+\frac{2}{3}\}$ and $G=\{[0,1];\,\frac{1}%
{2}x,\frac{1}{2}x+\frac{1}{2}\}$, the first with attractor equal to the
standard Cantor set $\mathcal{C}$, the second with attractor equal to the unit
interval. In this case the fractal transformation $T_{FG}:\mathcal{C}%
\rightarrow\lbrack0,1]$ is essentially the Cantor function. The Cantor
function is usually defined as a function $f:[0,1]\rightarrow\lbrack0,1]$ so
that if $x$ is expressed in ternary notation as $x=i_{1}\,i_{2}\,\cdots$ where
$i_{k}\in\{0,1,2\}$ for all $k$, then $f(x)=i_{1}^{\prime}\,i_{2}^{\prime
}\,\cdots$ expressed in binary, where $i^{\prime}=0$ if $i\in\{0,1\}$ and
$i^{\prime}=1$ if $i=2.$ The function $T_{F,G}:\mathcal{C}\rightarrow
\lbrack0,1]$ is essentially the same except the domain is $\mathcal{C}$ rather
than $[0,1]$.
\end{example}

Let $F$ and $G$ be IFSs with the same probability vectors and corresponding
invariant measures $\mu_{F}$ and $\mu_{G}$. If $\{e_{n}\}$ is an orthonormal
basis for $L_{F}^{2}$, then by Theorem~\ref{thm:isometry}, the set
$\{\widehat{e}_{n}\}=\{U_{FG}\,e_{n}\}$ is an orthonormal basis for $L_{G}%
^{2}$. In the following example, the two IFSs $F$ and $G$ have the same
attractor $A_{F}=A_{G}=[0,1]$, and the invariant measures are both Lebesque
measure. For example, the Fourier orthonormal basis $\{e^{2\pi inx}%
\}_{n=-\infty}^{\infty}$ of $L^{2}([0,1])$ is transformed under $U_{FG}$ to a
\textquotedblleft fractalized" orthonormal basis of $L^{2}([0,1])$. Therefore,
to any function in $L^{2}([0,1])$ there is a Fourier series and also
corresponding (via $T_{FG}$) a fractal Fourier series. (ii) To prove that
$U_{FG}U_{GF}=I_{F}$ we remove from $A_{F}$ all point that have more than one
address w.r.t. $F$, i.e. those point $x\in A_{F}$ for which $\pi_{F}^{-1}(x)$
is not a singleton and we also remove those points of $A_{F}$ for which
$\pi_{G}^{-1}(T_{FG}(x))$ is not a singleton; this is the set $A_{F}^{G}$
defined earlier; it has full measure, and $T_{GF}T_{FG}|_{A_{F}^{G}}$ is the
identity on $A_{F}^{G}$.

\subsection{Fractal Fourier sine series}

\label{sec:ss}%

\begin{figure}[ptb]%
\centering
\fbox{\includegraphics[
trim=0.000000in 0.000000in 0.047446in 0.000000in,
natheight=3.413400in,
natwidth=3.413400in,
height=1.5342in,
width=3.039in
]%
{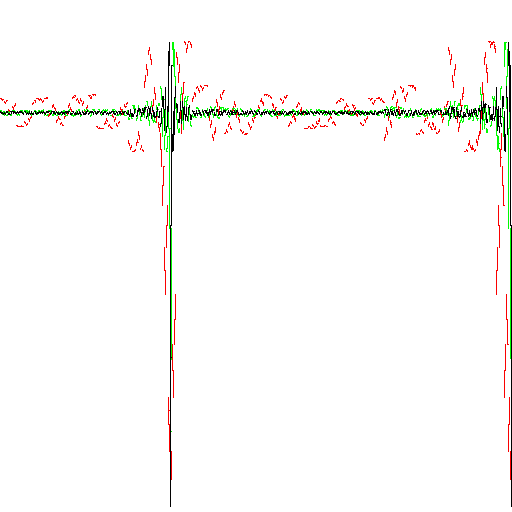}%
}\caption{Fractal sine series approximations to a constant function on the
interval $[0,1].$ The number of terms used here are $10$ (red), $50$ (green)
and $100$ (black). Compare with Figure \ref{fg06}; the r.m.s. errors are the
same as for the approximation to the same constant function using a sine
series with the same number of terms. Notice that the edge effect has been
shifted from $0$ to $1/3$.}%
\label{ffs05}%
\end{figure}
\begin{figure}[ptb]%
\centering
\fbox{\includegraphics[
natheight=3.413400in,
natwidth=3.413400in,
height=1.5342in,
width=3.039in
]%
{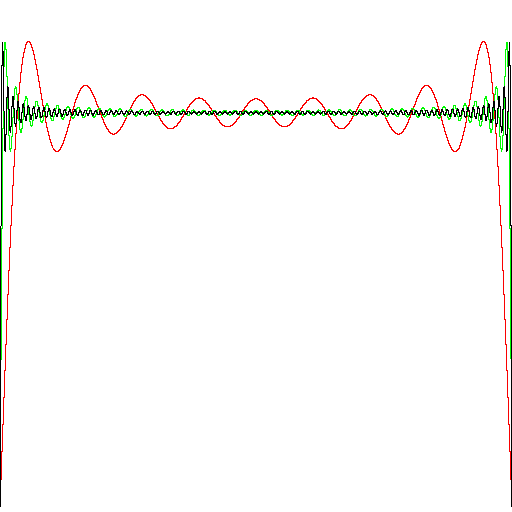}%
}\caption{For comparison with Figure \ref{ffs05}, this shows the Fourier sine
series approximations to a constant function on $[0,1]$ using $k=10$ (red),
$50$ (green) and $100$ (black) significant terms. Note the well-known end
effects at the edges of the interval.}%
\label{fg06}%
\end{figure}

Consider the IFSs $F,G_{1},G_{2}$ of Example~\ref{ex:1} with probabilities
$p_{1}=p_{2}=0.5$. In this case $\mu_{F},\mu_{G_{1}}$ and $\mu_{G_{2}}$ are
all Lebesque measure on $[0,1]$. Consider the orthonormal Fourier sine basis
$\{\sqrt{2}\,e_{n}\}_{n=1}^{\infty}$ for $L^{2}[0,1]$, where $e_{n}=\sin(n\pi
x)$.

For the fractal transformation $T_{FG_{1}},$ the fractally transformed
orthonormal basis for $L^{2}[0,1]$ is \linebreak$\{\sqrt{2}\,\widehat{e}%
_{n}\}_{n=1}^{\infty}$, where
\[
\widehat{e}_{n}(x)=\sin(n\pi T_{G_{1}F}(x)),
\]
for all $n\in\mathbb{N}$. Figure \ref{sinallpix} illustrates ${\widehat{e}%
_{i}},\,i=1,2,3$, in colors black, red, and green, respectively. For
comparison, Figure \ref{sines} illustrates the corresponding sine functions
$\sin(n\pi x)$ for $n=1,2,3$.%

\begin{figure}[ptb]%
\centering
\fbox{\includegraphics[
natheight=3.413400in,
natwidth=3.413400in,
height=1.5238in,
width=3.039in
]%
{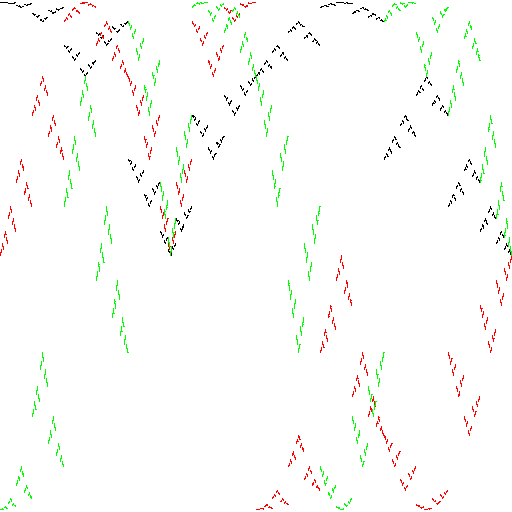}%
}\caption{See text. The first three eigenfunctions of the elementary fractal
transformed Laplacian on [0,1]; equivalently, the functions f-sin(n,x) for n=1
(black), 2 (red), 3 (green). The viewing window is [0,1]$\times$[-1,1].}%
\label{sinallpix}%
\end{figure}
%

\begin{figure}[ptb]%
\centering
\fbox{\includegraphics[
natheight=3.413400in,
natwidth=3.413400in,
height=1.5342in,
width=3.039in
]%
{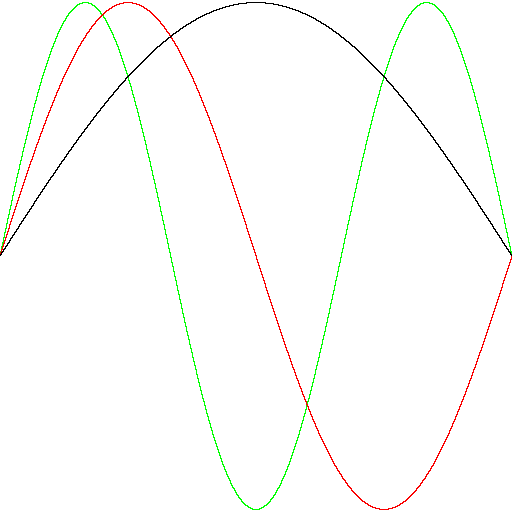}%
}\caption{This illustrates the sine functions $sin(n\pi x)$ for n=1,2,3 for
comparison with the fractal sine function shown in Figure \ref{sinallpix}.}%
\label{sines}%
\end{figure}

\begin{example}
[Constant function]Figure \ref{ffs05} illustrates three fractal Fourier sine
series approximations to a constant function on the interval $[0,1]$, while
Figure \ref{fg06} illustrates the standard sine series Fourier approximation
using the same numbers of terms. The respective Fourier series are
\[
\sum_{n=1}^{\infty}\frac{\widehat{e}_{2n-1}(x)}{2n-1}\qquad\text{and}%
\qquad\sum_{n=1}^{\infty}\frac{e_{2n-1}\,(x)}{2n-1}.
\]
The calculation, in the first case, of the Fourier coefficients, uses the
change of variables formula, the fact from Example~\ref{ex:1} that $\mu_{F}$
and $\mu_{G_{!}}$ are Lebesque measure, and statement 2 of
Theorem~\ref{thm:MP}. The mean square errors are the same when using the same
number of terms.
\end{example}

\begin{example}
[Step function]Next consider Fourier approximants to a step function. The
fractal transformation $T_{FG_{2}}$ has fractal sine functions defined by
\[
\widetilde{e}_{n} := \sin(n\pi T_{G_{2}F}(x))
\]
for all $n\in\mathbb{N}$. Figures \ref{step0}, \ref{step1}, and \ref{step2}
illustrate the Fourier approximations for $100$ (green) and $500$ (black)
terms, where the orthogonal bases functions are $e_{n}, \; \widehat{e}_{n} $
and $\widetilde{e}_{n},$ respectively. The respective Fourier series are
\[
\frac{2}{\pi}\, \sum_{n=1}^{\infty} \frac{1-\cos(n\pi/2)}{n} \, f_{n}(x),
\]
where $f_{n}$ is $e_{n}, \; \widehat{e}_{n} $ and $\widetilde{e}_{n},$
respectively. The point to notice is that the jump in the step function at
$x=0.5$ is cleanly approximated in both the fractal series, in contrast to the
well-known edge effect (Gibbs phenomenon) in the classical case. The price
that is paid is that the fractal approximants have greater pointwise errors at
some other values of $x$ in $[0,1]$. The analysis of where this occurs and
proof that the mean square error is the same for all three schemes, is omitted here.%

\begin{figure}[ptb]%
\centering
\fbox{\includegraphics[
natheight=3.413400in,
natwidth=3.413400in,
height=2.0358in,
width=2.0358in
]%
{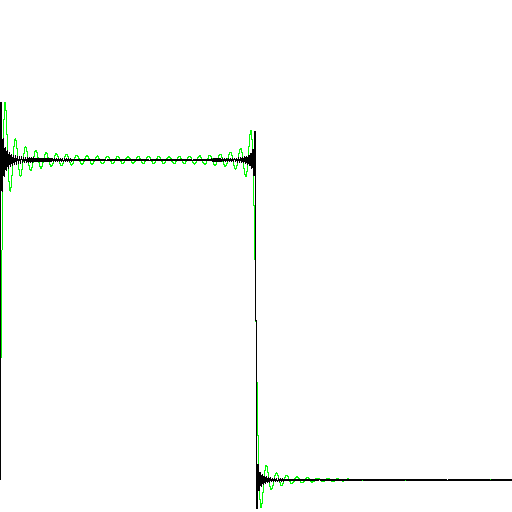}%
}\caption{Sum of the first 100 (green) and 500 (black) terms in the Fourier
sine series for a step function. The viewing window is [0,1]$\times
$[-0.1,1.5]. Compare with Figures \ref{step1} and \ref{step2}. }%
\label{step0}%
\end{figure}
%

\begin{figure}[ptb]%
\centering
\fbox{\includegraphics[
natheight=3.413400in,
natwidth=3.413400in,
height=2.0358in,
width=2.0358in
]%
{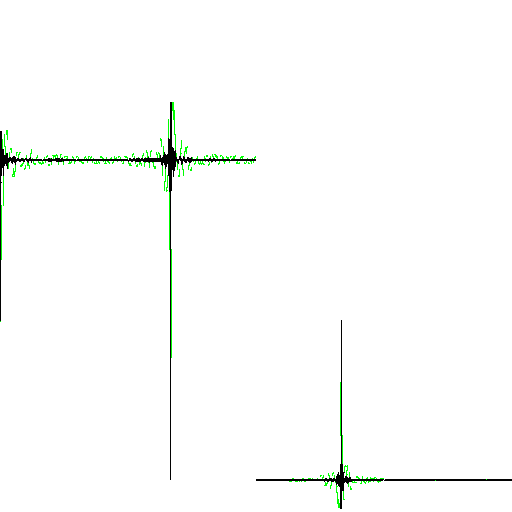}%
}\caption{Sum of the first 100 (green) and 500 (black) terms in a fractal
Fourier sine series (using f-sin(n,x) functions) for a step function. Compare
with Figures \ref{step0} and \ref{step2}.}%
\label{step1}%
\end{figure}
%

\begin{figure}[ptb]%
\centering
\fbox{\includegraphics[
natheight=3.413400in,
natwidth=3.413400in,
height=2.0358in,
width=2.0358in
]%
{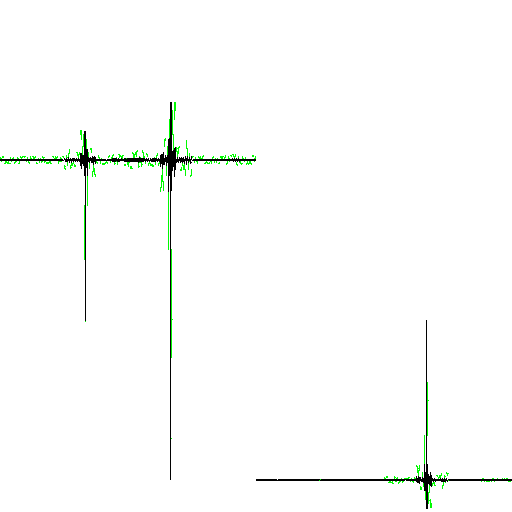}%
}\caption{Sum of the first 100 (green) and 500 (black) terms in a fractal
Fourier sine series (using f2-sin(n,x) functions) for a step function. Compare
with Figures \ref{step0} and \ref{step1}.}%
\label{step2}%
\end{figure}

\end{example}

\begin{example}
[Tent function]\label{ex:tent}In Figure \ref{tent2} partial sums of the
Fourier sine series and their fractal counterparts are compared, for the tent
function $f(x)=\min\{x,1-x\}$ on the unit interval. The Fourier series with
orthogonal functions $e_{n}$ is compared with the Fourier series with fractal
orthogonal functions $\widetilde{e}_{n}$, using $3$ (red), $5$ (green), $7$
(blue), $20$ (black) terms. The Fourier series are (up to a normalization
constant)%
\[
\sum_{n=1}^{k}\frac{2\sin(\pi n/2)-\sin(\pi n)}{n^{2}}\,e_{n}(x)\ \qquad\text{
and }\qquad\sum_{n=1}^{k}\frac{2\sin(\pi n/2)-\sin(\pi n)}{n^{2}}%
\,\widetilde{e}_{n}(x).
\]

\end{example}%

\begin{figure}[ptb]%
\centering
\fbox{\includegraphics[
natheight=3.413400in,
natwidth=3.413400in,
height=2.0358in,
width=2.0358in
]%
{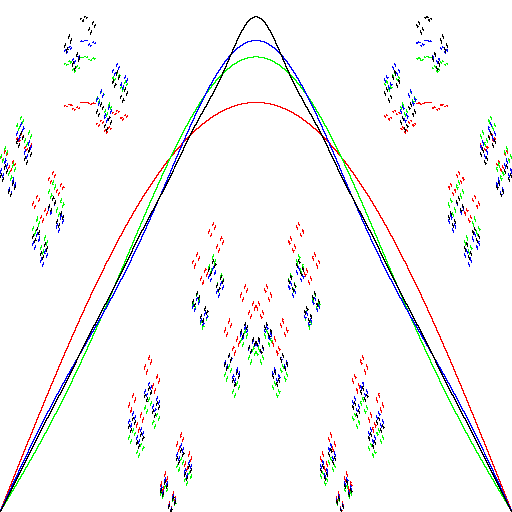}%
}\caption{See Example \ref{ex:tent}. Fourier sine series approximants to a
tent function and fractal counterparts.}%
\label{tent2}%
\end{figure}

\begin{example}
[Function with a dense set of discontinuities]\label{ex:dd} Consider the
following approximation of a function with a dense set of discontinuities. For
$i=1,2$, let $\psi\in L^{2}[0,1]$ be defined by $\psi(x)=x$ for all
$x\in\lbrack0,1]$. Then $\phi_{i}=U_{FG_{i}}\psi,\,i=1,2,$ is given by
$\phi_{i}(x)=(U_{FG_{i}}\psi)(x)=\psi(T_{G_{i}F}(x))=T_{G_{i}F}(x)$, which has
a dense set of discontinuities. It follows, by a short calculaltion using
statement 2 of Theorem~\ref{thm:MP}, that the coefficients in the $\widehat
{e}_{n}$ and $\widetilde{e}_{n}$ Fourier series expansion of $\phi_{i}$ are
the same as the coefficients in the $e_{n}$ expansion for $\psi$. Therefore
the fractal version Fourier series expansions for $\phi_{i},\,i=1,2,$ are
\[
\frac{2}{\pi}\sum_{n=1}^{\infty}\frac{-\cos(\pi n)}{n}\;\widehat{e}%
_{n}(x),\qquad\text{and}\qquad\frac{2}{\pi}\sum_{n=1}^{k}\frac{-\cos(\pi
n)}{n}\,\widetilde{e}_{n}(x),
\]
respectively. Sums with $10,30,$ and $100$ terms are shown in red, green, and
blue, respectively, in Figure \ref{fx} for $\phi_{1}$, and for $\phi_{2}$ in
Figure \ref{n500b} using the first 1000 terms of the series.%

\begin{figure}[ptb]%
\centering
\fbox{\includegraphics[
natheight=3.413400in,
natwidth=3.413400in,
height=2.0358in,
width=2.0358in
]%
{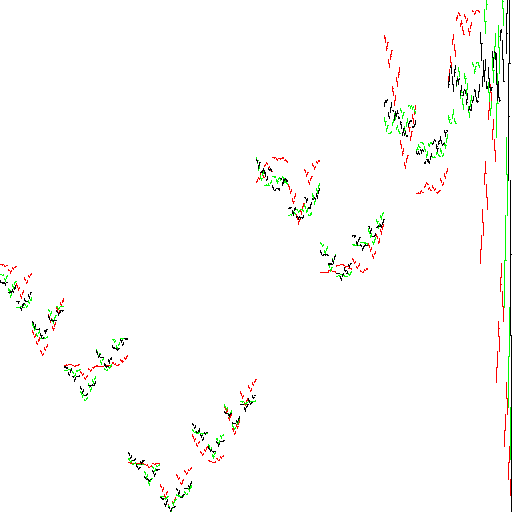}%
}\caption{See Example  \ref{ex:dd}. Compare with Figure \ref{tfg02}. The
approximants converge to $T_{G_{1}F}(x)$ in $\mathcal{L}^{2}[0,1]$ as the
number of terms in series sum approaches infinity.}%
\label{fx}%
\end{figure}
%

\begin{figure}[ptb]%
\centering
\fbox{\includegraphics[
natheight=14.221800in,
natwidth=22.583700in,
height=2.0349in,
width=2.0358in
]%
{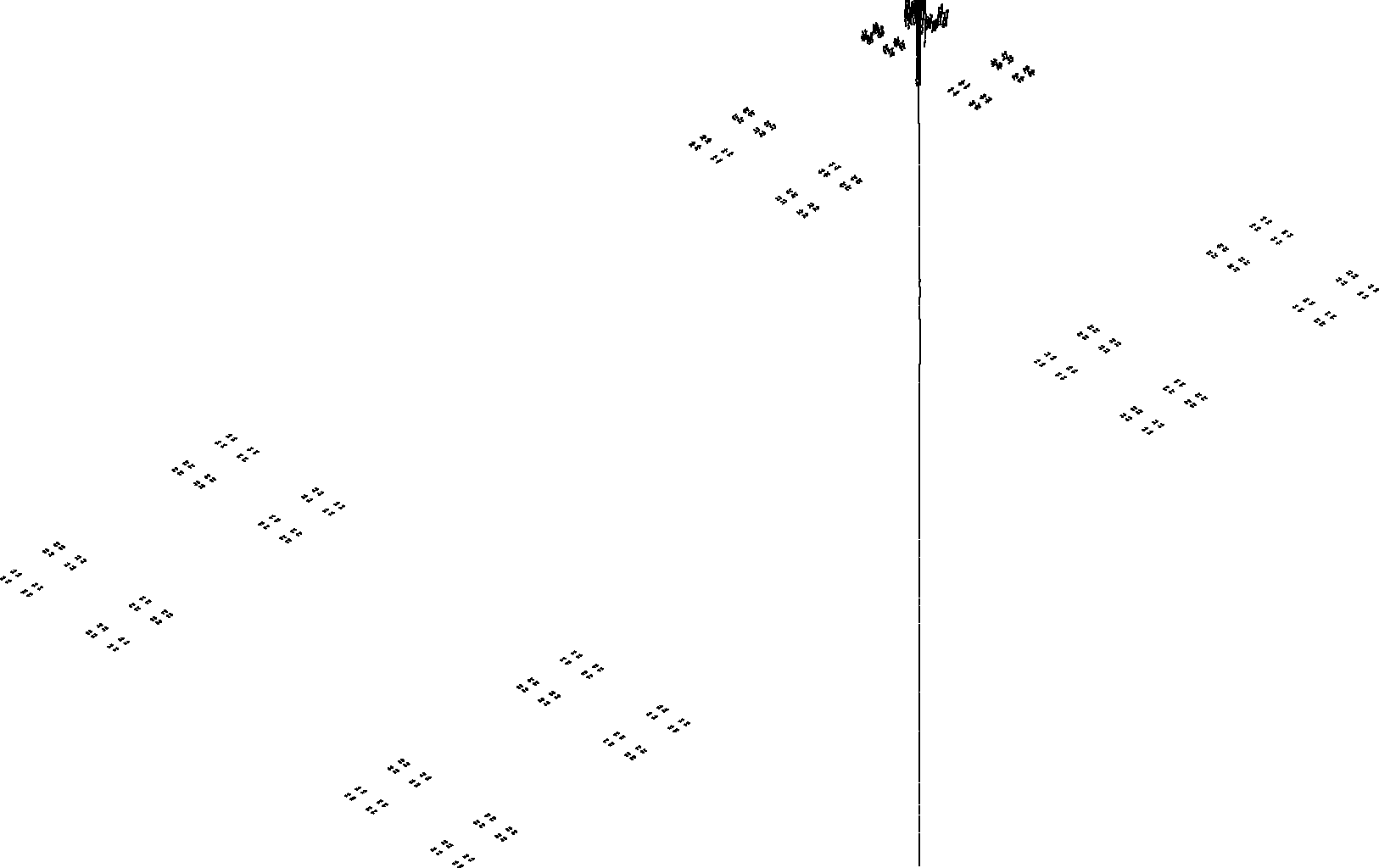}%
}\caption{See Example \ref{ex:dd}. This illustrates the sum of the first
thousand terms of a fractal sine series for $T_{FG_{2}}(x)$ on $[0,1]$.
Compare with Figure \ref{2ndfsin01}.}%
\label{n500b}%
\end{figure}

\end{example}

\subsection{\label{sec:legendre} Legendre polynomials. }

The Legendre polynomials are the result of applying  Gram-Schmidt
orthogonalization $\{1,x,x^{2},\dots\}$, with respect to Lebesgue measure on
$[-1,1]$. Denote the Legendre polynomials shifted to the interval $[0,1]$ by
$\{P_{n}(x)\}_{n=0}^{\infty}$. They form a complete orthogonal basis for
$L^{2}[0,1]$, where the inner product is
\[
\langle\psi,\varphi\rangle\,=\int\limits_{0}^{1}\overline{\psi(x)}%
\varphi(x)dx\text{.}%
\]

In this case each of the unitary transformations $U_{FG}$ associated with
Example \ref{ex:1} maps $L^{2}[0,1]$ to itself, and we obtain the
\textquotedblleft fractal Legendre polynomials"%
\[
P_{n}^{FG}(x)=P_{n}(T_{GF}(x))\text{.}%
\]
With $F,G_{1},G_{2}$ as previously defined in Example~\ref{ex:1}, Figures
\ref{legendref} and \ref{legendref2} illustrate the Legendre polynomials and
their fractal counterparts. Figure \ref{legendref} shows the fractal Legendre
polynomials $P_{n}^{FG_{1}}(x)$ and Figures \ref{legendref2} shows the fractal
Legendre polynomials $P_{n}^{FG_{2}}(x)$.%

\begin{figure}[ptb]%
\centering
\fbox{\includegraphics[
natheight=3.413400in,
natwidth=3.413400in,
height=2.0358in,
width=2.0358in
]%
{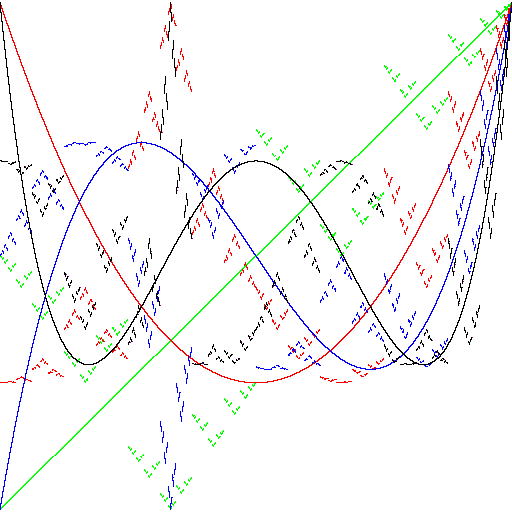}%
}\caption{Legendre polynomials and their fractal counterparts corresponding to
$T_{FG_{1}}$. Both sets of functions form orthogonal basis sets with respect
to Lebesgue measure on the interval $[-1,1]$. See also Figure \ref{Legendref2}%
.}%
\label{legendref}%
\end{figure}
%

\begin{figure}[ptb]%
\centering
\fbox{\includegraphics[
natheight=3.413400in,
natwidth=3.413400in,
height=2.0358in,
width=2.0358in
]%
{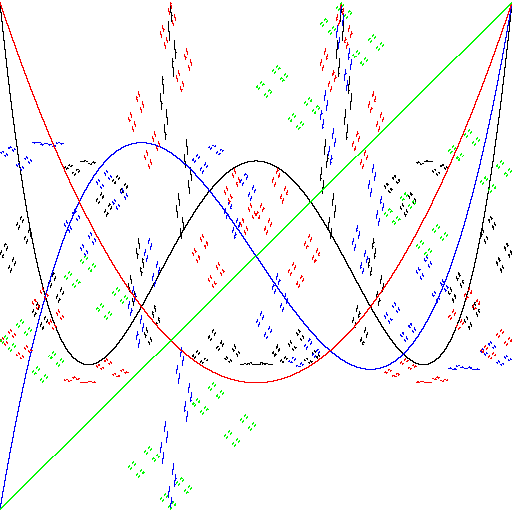}%
}\caption{Legendre polynomials and their fractal counterparts corresponding to
$T_{FG_{2}}$. See also Figure \ref{legendref}.}%
\label{legendref2}%
\end{figure}

\subsection{ \label{ex2b(ctd)HaarX} The action of the unitary operator on Haar
wavelets.}

With $F, G_{2}$ and $T=T_{FG_{2}}:\mathbb{[}0,1]\rightarrow\lbrack0,1]$ as
previously defined, let $U = U_{FG_{2}} :L^{2}[0,1]\rightarrow L^{2}[0,1]$ be
the associated (self-adjoint) unitary transformation. Let $I_{\emptyset
}=[0,1]$ and $H_{\emptyset}:\mathbb{R}\rightarrow\mathbb{R}$ be the Haar
mother wavelet defined by%
\[
H_{\emptyset}(x)=\left\{
\begin{array}
[c]{c}%
+1\text{ if }x\in\lbrack0,0.5)\text{,}\\
-1\text{ if }x\in\lbrack0.5,1)\text{,}\\
0\text{ otherwise.}%
\end{array}
\right.
\]
For $\sigma\in\{0,1\}^{k}, \, k\in\mathbb{N}$, write $\sigma=\sigma_{1}%
\sigma_{2}...\sigma_{k}$ and $\left\vert \sigma\right\vert =k$. If $\left\vert
\sigma\right\vert =0$ then $\sigma=\emptyset$, the empty string. Also let
$I_{\sigma}=h_{\sigma_{1}}\circ h_{\sigma_{2}}\circ...\circ h_{\sigma_{k}%
}(I_{\emptyset}),$ where $h_{0}=f_{1}$ and $h_{1}=f_{2}$, and let $A_{\sigma
}:\mathbb{R\rightarrow R}$ be the unique affine map such that $A_{\sigma
}(I_{\emptyset})=I_{\sigma}$. With this notation, the \textbf{standard Haar
basis}, a complete orthonormal basis for $L^{2}[0,1]$, is
\[
\{H_{\sigma}:\sigma\in\{0,1\}^{k}, k\in\mathbb{N\}\cup\{}H_{\emptyset
}(x)\}\cup\{\mathbf{1}\},
\]
where $\mathbf{1}$ is the characteristic function of $[0,1)$ and $H_{\sigma
}:[0,1)\rightarrow\mathbb{R}$ is defined by
\[
H_{\sigma}(x)=2^{\left\vert \sigma\right\vert /2}H_{_{\emptyset}}(A_{\sigma
}^{-1}(x))\text{.}%
\]

There is an interesting action of $U=U_{FG_{2}}$ on Haar wavelets. The
operator $U$ permutes pairs of Haar wavelets at each level and flips signs of
those at odd levels, as follows. By calculation, for $\sigma\in\cup
_{k\in\mathbb{N}}\{0,1\}^{k}$,
\[
UH_{\sigma}=(-1)^{\left\vert \sigma\right\vert }H_{\sigma^{\prime}}%
\]
where $\left\vert \sigma\right\vert =\left\vert \sigma^{\prime}\right\vert $
and $\sigma_{l}^{\prime}=(-1)^{l+1}\sigma_{l}+(1+(-1)^{k})/2$ for all
$l=1,2,...,\left\vert \sigma^{\prime}\right\vert $, $UH_{\emptyset
}=H_{\emptyset}$, and $U1=1$. It follows that if $f\in L^{2}[0,1]$ is of the
special form%
\[
f=a_{\emptyset}H_{\emptyset}+\sum_{\sigma\in\cup_{k\in\mathbb{N}}\{0,1\}^{2k}%
}c_{\sigma}(H_{\sigma}+H_{\sigma^{\prime}})\text{,}%
\]
then $Uf=f$ and $f\circ T=f$.\ Such signals are invariant under $U$. It also
follows that if $P$ is the projection operator that maps $L^{2}[0,1]$ onto the
span of all Haar wavelets down to a fixed depth, then $U^{-1}PU=P$.

\subsection{Unitary transformations from the Hilbert mapping and its
inverse\label{ex3(ctd)}}

This continues Example~\ref{ex:Hilbert}, where the fractal transformations
$h:=T_{FG}$ and $h^{-1}:=T_{GF}$ are the Hilbert mapping and its inverse, both
of which both preserve Lebesgue measure and are mappings between one and two
dimensions. The unitary transformations $U_{FG}\,:\,L^{2}([0,1])\rightarrow
L^{2}([0,1]^{2})$ and $U_{GF}\,:\,L^{2}([0,1]^{2})\rightarrow L^{2}([0,1])$
are given by
\[
U_{FG}(f)=f\circ h^{-1},\qquad\qquad U_{GF}(f)=f\circ h.
\]
A \textit{picture} can be considered as a function $f:[0,1]^{2}\rightarrow
\mathbb{R}^{3}$, where the image of a point $x$ in ${\mathbb{R}}^{3}$ gives
the RGB colours. The top image of Figure \ref{lenawiline} is a picture of the
graph of such a function $f:[0,1]^{2}\rightarrow\mathbb{R}^{3}$. The bottom
image is the function (picture) $U_{GF}f=f\circ h$ transformed by the unitary operator.%

\begin{figure}[ptb]%
\centering
\includegraphics[
natheight=3.560000in,
natwidth=5.119900in,
height=1.9545in,
width=3.6123in
]%
{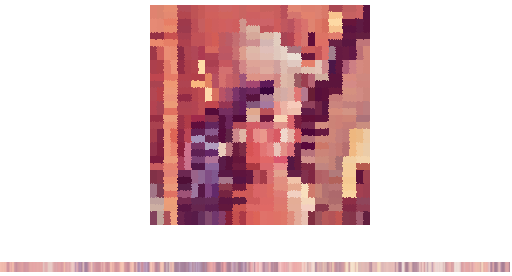}%
\caption{See Section \ref{ex3(ctd)}. }%
\label{lenawiline}%
\end{figure}

The Hilbert map $h:[0,1]\rightarrow\lbrack0,1]^{2}$ is continuous, one
consequence of which is that, if $f:[0,1]^{2}\rightarrow\mathbb{R}^{3}$ is
continuous, then so is the pull-back $U_{GF}(f)=f\circ h:[0,1]\rightarrow
\mathbb{R}^{3}$. To illustrate, any orthonormal basis w.r.t. Lebesgue measure
on $[0,1]$ is mapped, via the unitary operator $U_{FG}$, to an orthonormal
basis w.r.t. Lebesgue measure on $[0,1]^{2}$, and conversely. Because the
Hilbert mapping is continuous, an orthonormal basis of continuous functions
$\{\psi_{n}:[0,1]^{2}\rightarrow$ $\mathbb{R\}}$ is transformed by $U_{GF}$ to
an orthonormal basis of continuous functions $\{\psi_{n}\circ
h:[0,1]\rightarrow$ $\mathbb{R\}}$. In the other direction, the image of an
orthonormal basis consisting of continuous functions on $[0.1]$ may not
comprise continuous functions on $[0,1]^{2}$. Figures \ref{sinall} and
\ref{ctssinhilbert} illustrate this.

In Figure \ref{both}, the right image represents the graph of $f:[0,1]^{2}%
\rightarrow\lbrack-1,1]$ defined by $f(x,y)=\sin(\pi x)\sin(\pi y)$. The left
image represents the graph of $g:[0,1]^{2}\rightarrow\lbrack-1,1]$ defined by
the continuous function $g(x,y)=U_{GF}(f)=f\circ h(x)$ where
$h:[0,1]\rightarrow\lbrack0,1]^{2}$ is the Hilbert function. The set of
functions in the orthogonal basis $\left\{  \sin(n\pi x)\sin(m\pi
y):n,m\in\mathbb{N}\right\}  $ for $L^{2}([0,1]^{2})$ (w.r.t.Lebesque
two-dimensional measure) is fractally transformed via the Hilbert mapping to
an othogonal basis for $L^{2}[0,1]$ (w.r.t.Lebesgue one-dimensional measure).
In contrast to the situation in Section~\ref{sec:ss}, these "fractal sine
functions" are continuous.%

\begin{figure}[ptb]%
\centering
\includegraphics[
natheight=2.860100in,
natwidth=5.119900in,
height=2.0299in,
width=3.6123in
]%
{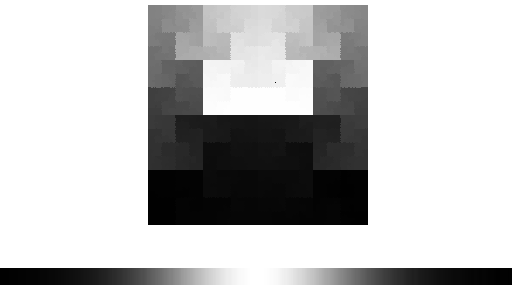}%
\caption{ The bottom band shows the graph of $\sin(\pi x)$ with function
values represented by shades of grey. The top band shows the graph of
$h(sin(\pi x))$, where $h$ is the Hilbert function.}%
\label{sinall}%
\end{figure}
%

\begin{figure}[ptb]%
\centering
\includegraphics[
natheight=1.766700in,
natwidth=3.276900in,
height=2.0324in,
width=3.6123in
]%
{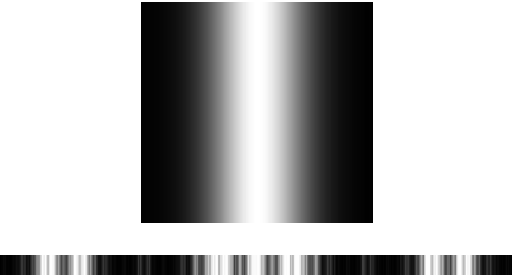}%
\caption{The top image illustrates the graph of $f(x,y)=\sin(\pi x)$ for
$x,y\in\lbrack0,1]^{2}$. The band at the bottom illustrates the graph of the
pull-back $f\circ h:[0,1]\rightarrow\lbrack-1,1]$, which is continuous, in
contrast to the situations in Figures \ref{lenawiline}.}%
\label{ctssinhilbert}%
\end{figure}
%

\begin{figure}[ptb]%
\centering
\includegraphics[
natheight=3.276900in,
natwidth=6.617600in,
height=1.9363in,
width=3.8821in
]%
{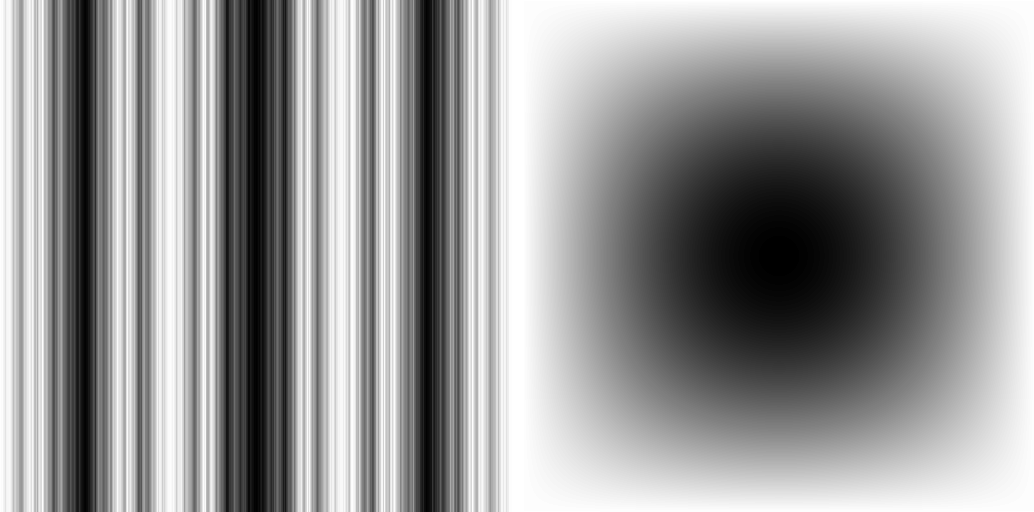}%
\caption{The right image represents the graph of $f:[0,1]^{2}\rightarrow
\lbrack-1,1]$ defined by $f(x,y)=\sin(\pi x)\sin(\pi y)$. The left image
represents the graph of $g:[0,1]^{2}\rightarrow\lbrack-1,1]$ defined by the
continuous function $g(x,y)=U_{GF}(f)=f\circ h(x)$ where $h:[0,1]\rightarrow
\lbrack0,1]^{2}$ is the Hilbert function. }%
\label{both}%
\end{figure}

\section{Fractal Transformation of a Linear Operator}

\label{sec:LO}

Let $F$ and $G$ be IFSs with the same number of functions. Using the same
notation as in the previous section, if $W_{F} \, : \,L^{2}_{F} \rightarrow
L^{2}_{F}$ is a linear operator, then the fractally transformed linear
operator $W_{G} : \,L^{2}_{G} \rightarrow L^{2}_{G}$ defined by
\[
W_{G} =U_{FG} \circ W_{F} \circ U_{GF}%
\]
is also a linear operator. If $W_{F}$ is a bounded, self-adjoint linear
ooperator with spectral representation
\[
W_{F}=\int_{-\infty}^{+\infty}\lambda dP_{\lambda}^{F}\text{,}%
\]
where $P_{\lambda}^{F}$ is an increasing family of projections on $L_{F}^{2}$,
then
\[
W_{G}=\int_{-\infty}^{+\infty}\lambda dP_{\lambda}^{G}%
\]
where $P_{\lambda}^{G}=U_{FG} \circ P_{\lambda}^{F}\circ U_{GF}$. In
particular, $W_{F}$ and $W_{G}$ have the same spectrum.

\subsection{Differentiable functions}

\label{sec:diff}

\begin{definition}
Let $F$ and $G$ be IFSs with $F$ and $G$ non-overlapping, and $T_{FG}$ the
fractal transformation from $A_{F}$ to $A_{G}$. Assume that the attractor
$A_{F}$ of $F$ is the interval $[0,1]$, and denote the $k$ times continuously
differentiable functions $f : A_{F} = [0,1] \rightarrow{\mathbb{R}}$ by
$C_{F}^{k} $. The set
\[
C_{G}^{k} = \{ U_{FG} f \, : \, f \in C_{F}^{k} \}
\]
will be called $k$ \textbf{times continuously differentiable fractal
functions}. If the $k^{th}$ derivative of $f \in C_{F}^{k}$ is denoted
$D_{F}^{k} \, f$, where $D_{F}^{k}$ is the differential operator, then
\[
D_{G}^{k} \, g := (U_{FG}\circ D_{F}^{k} \circ U_{GF})\, g
\]
will be referred to as the $k^{th}$ \textbf{fractal derivative of} $g \in
C_{G}^{k}$.
\end{definition}

Note that analogous definitions can be made when $A_{F}$ is a subset of
${\mathbb{R}}^{n}$ with nonempty connected interior, for example a square or
filled triangle in the plane. In that case, we have partial derivatives.

To obtain an intuitive interpretation of the fractal derivative, consider the
case where the attractor of $F$ (with probability vector $p$) is $[0,1]$ as
above and $F$ has the property that $\pi_{F}$ is an increasing function from
the code space to $[0,1]$ with respect to the lexicographic order on the code
space. Assume, similarly, that $G$ (with the same probability vector $p$) has
the property that there is a linear order $\preceq$ on $A_{G}$ such that $\pi$
is increasing with respect to this order on $A_{G}$ and the lexicographic
order on the code space. Assume further that $T_{FG}$ is a fractal
homeomorphism. Note that all the above assumptions hold in
Examples~\ref{ex:CF} of the Cantor set and Example~\ref{ex:1} of the Koch curve.

For $y_{1}, y_{2} \in A_{G}$ we use the following notation for the interval:
$[y_{1},y_{2}] = \{ y \,:\, y_{1} \preceq y \preceq y_{2}\}$. Under these
assumptions, and with Lebesque measure $\mu$ as the invariant measure of $F$
and $\mu_{G}$ the invariant measure of $G$, define the \textbf{fractal
difference} betwen a pair of points in $A_{G}$ by
\[
y_{1} - y_{2} =
\begin{cases}
\mu_{G}([y_{2}, y_{1}]) \qquad\text{ if } \; y_{1} \geq y_{2},\\
-\mu_{G}([y_{1},y_{2}]) \quad\text{ if } \; y_{1} < y_{2}.
\end{cases}
\]

\begin{theorem}
With notation as above, if $g : A_{G} \rightarrow{\mathbb{R}}$ is a
differentiable fractal function, then
\[
D_{G} \, g \, (y_{0}) = \lim_{y \rightarrow y_{0}} \frac{g(y ) - g(y_{0} )}{ y
- y_{0}}.
\]

\end{theorem}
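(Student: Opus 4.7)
The plan is to reduce the fractal-derivative limit on the right to the ordinary Newton-quotient limit of the pull-back function on $[0,1]$, exploiting measure preservation of $T_{FG}$ to identify the fractal difference with a standard difference of real numbers.

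First I would unwind the definitions. Set $f := U_{GF}\, g$, so that $f \in C_F^1$ and $g = U_{FG} f = f \circ T_{GF}$. By the definition of the fractal derivative,
\[
D_G g = (U_{FG} \circ D_F \circ U_{GF})\, g = (D_F f) \circ T_{GF},
\]
so if $x_0 := T_{GF}(y_0)$ then $D_G g(y_0) = (D_F f)(x_0)$. For $y$ near $y_0$, put $x := T_{GF}(y)$; the numerator on the right of the claimed identity becomes $g(y)-g(y_0) = f(x)-f(x_0)$.

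Next I would match the denominators. Under the standing hypotheses, $\pi_F$ and $\pi_G$ are both increasing with respect to the lexicographic order on $I^\infty$, so $T_{GF} = \pi_F \circ \tau_G$ is order-preserving between $(\preceq)$ on $A_G$ and the usual order on $[0,1]$. Since $T_{FG}$ is a fractal homeomorphism, $T_{GF}$ carries the interval $[y_0,y] \subset A_G$ bijectively onto $[x_0,x] \subset [0,1]$ (with $y_0,y$ and $x_0,x$ swapped if $y \prec y_0$). By statement (2) of Theorem~\ref{thm:MP}, $\mu_G \circ T_{FG} = \mu_F$, hence
\[
\mu_G([y_0,y]) = \mu_F([x_0,x]) = |x - x_0|,
\]
the last equality because $\mu_F$ is Lebesgue measure on $[0,1]$. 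Comparing the two sign cases in the definition of the fractal difference yields $y - y_0 = x - x_0$ in every case. Since $T_{GF}$ is a homeomorphism, $y \to y_0$ forces $x \to x_0$, and combining the two observations,
\[
\lim_{y \to y_0} \frac{g(y)-g(y_0)}{y-y_0} \;=\; \lim_{x \to x_0} \frac{f(x)-f(x_0)}{x-x_0} \;=\; D_F f(x_0) \;=\; D_G g(y_0),
\]
the middle limit being the ordinary derivative of $f \in C_F^1$ at $x_0$.

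The main obstacle is the denominator-matching step: one has to check carefully that $T_{GF}$ sends the closed ``interval'' $[y_0,y]$ onto $[x_0,x]$ and manage the sign bookkeeping built into the definition of fractal difference. Strict bijectivity of $T_{GF}$ fails on the exceptional sets $A_F^0, A_G^0$ of Lemma~\ref{lem:bi}, but these have $\mu_F$- and $\mu_G$-measure zero, and since we only need the $\mu_G$-measure of an interval, exceptional endpoints can be absorbed without altering $\mu_G([y_0,y])$. Once $y-y_0 = x-x_0$ is in hand, the remainder is a direct change of variable in the difference quotient.
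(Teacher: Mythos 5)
Your proposal is correct and follows essentially the same route as the paper's proof: pull $g$ back to $f=U_{GF}g$, use continuity of $T_{GF}$ to trade the limit in $y$ for the limit in $x=T_{GF}(y)$, and identify the fractal difference $y-y_0$ with $x-x_0$ via measure preservation. The only cosmetic difference is that you invoke statement (2) of Theorem~\ref{thm:MP} to equate $\mu_G([y_0,y])$ with $\mu_F([x_0,x])$, whereas the paper unwinds the same fact through Lemma~\ref{lem:ZF} and the Bernoulli measure on code space; you are also somewhat more careful than the paper about the order-preservation needed to see that intervals map to intervals.
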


\begin{proof}
If $g : A_{G} \rightarrow{\mathbb{R}}$ is a differentiable fractal function,
then there is an $f : [0,1] \rightarrow{\mathbb{R}}$ such that $g = U_{FG} \,
f$. Now
\[
\begin{aligned} D_G \, g(y_0) &= (U_{FG} \circ \frac{d}{dx} \circ U_{GF} \, g ) \,(y_0) = ((U_{FG} \circ \frac{d}{dx} \circ U_{GF}) \,( U_{FG}\,f ) ) \, (y_0) \\ & =( (U_{FG} \circ \frac{d}{dx} ) \, f) (y_0) = (U_{FG} \circ f') \,(y) = f'( T_{GF}\, y_0) = \lim_{x\rightarrow T_{GF} \, y_0} \frac{f(x) - f(T_{GF}\, y_0 )}{x - T_{GF}\, y_0}. \end{aligned}
\]
Given $x \in[0,1]$, there is a unique $y_{x} \in A_{G}$ such that $T_{GF} \,
y_{x} = x$. Moreover, since $T_{GF}$ is continuous, as $y \rightarrow y_{0}$
we have $T_{GF} \, y \rightarrow T_{GF} \, y_{0}$, i.e., $x \rightarrow T_{GF}
\, y_{0}$. Therefore
\[
D_{G} \, g(y_{0}) = \lim_{y \rightarrow y_{0}} \frac{f(T_{GF} \,y ) -
f(T_{GF}\, y_{0} )}{ T_{GF}\, y - T_{GF}\, y_{0}} = \lim_{y \rightarrow y_{0}}
\frac{g(y ) - g(y_{0} )}{ T_{GF}\, y - T_{GF}\, y_{0}}.
\]
By Lemma~\ref{lem:ZF}, if $\nu$ is the invariant measure on code space with
probability vector $p$, then (assume $y \succeq y_{0}$ without loss of
generality)
\[
\begin{aligned} \mu ( [T_{GF}\, y_0, T_{GF} \, y] ) &= \nu ( \pi_F^{-1} ([ T_{GF}\, y_0, T_{GF}\, y]) = \nu ( [ \pi_F^{-1} \, T_{GF}\, y_0, \pi_F^{-1}\, T_{GF}\, y] ) \\ &= \nu ([\tau_G\, y_0, \tau_G \, y]) = \nu ( [ \pi_G^{-1} \, y_0, \pi_G^{-1} \, y] ) = \nu ( \pi_G^{-1} ( [ y_0, y] ) ) \\ &= \mu_G ( [ y_0, y] ) = y - y_0. \end{aligned}
\]
Therefore
\[
D_{G} \, g \, (y_{0}) = \lim_{y \rightarrow y_{0}} \frac{g(y ) - g(y_{0} )}{ y
- y_{0}}.
\]

\end{proof}

\begin{example}
[Derivative of the Cantor function]Consider the two IFS's $F = \{ [0,1]; \,
\frac12 x , \frac12 x + \frac12\}$ and $G = \{ \mathcal{C}; \, \frac13 x ,
\frac13 x + \frac23\}$ of Example~\ref{ex:CF}. Let $T_{FG} : [0,1]
\rightarrow\mathcal{C}$ be the fractal transformation from the unit interval
to the Cantor set. If $f : [0,1] \rightarrow{\mathbb{R}}$ is the function
$f(x) = x$, for example, then $g := U_{FG}\, f = T_{GF}$ is exactly the Cantor
function described in Example~\ref{ex:CF}. The fractal derivative of this
Cantor function $g$ is
\[
(D_{G} \, g) (y) = (U_{FG}\circ D_{F} \circ U_{GF}\, g) (y) = (U_{FG}\circ
D_{F} \circ U_{GF} \circ U_{FG} \, f) (y) = (U_{FG}\circ\mathbf{1}_{F}) (y) =
\mathbf{1}_{G} (y) = 1
\]
for all $y \in\mathcal{C}$, where $\mathbf{1}_{F}$ and $\mathbf{1}_{G}$ are
the constant $1$ functions on $[0,1]$ and $\mathcal{C}$, respectively.
Therefore the Cantor function has constant a.e. fractal derivative $1$.
\end{example}

Since the fractal transformation $T_{FG}$ induces transformations on the set
of points of $A_{F}$, on the set $L_{2}(F)$ of functions on $A_{F}$, and on
the set of linear operators on $L^{2}(F)$, any differential equation on
$A_{F}$ can be transformed into a differential equation on $A_{G}$.

\begin{example}
[Differential equation on the Koch curve]Consider the fractal transformation
of Example~\ref{ex:1}, from the unit interval to the Koch curve. The simple
initial value ODE
\[
\frac{dy}{dx}=y,\quad y(0)=1
\]
on the interval $[0,1]$ with solution $y=e^{x}$ transforms to the fractal ODE
\[
(U_{FG}\circ\frac{d}{dx}\circ U_{GF})\,\widehat{y}=\widehat{y},\qquad
\widehat{y}(T_{FG}0)=1
\]
on the Koch curve. The fractal solution to this ODE is the function
$g:=U_{FG}(\exp)$, i.e., $g(x)=e^{T_{GF}(x)}$.
\end{example}

\section{Fractal Flows}

\label{sec:FF}

Let $(X, \mu)$ be a metric space with Borel measure $\mu$, and let $f :
X\rightarrow X$ be invertible almost everywhere, i.e. if there is a function
$f^{-1} : X \rightarrow X$ such that $f\circ f^{-1}(x) = f^{-1}\circ f(x) - x
$ for all $x$ in a set of measure $1$. Let ${\mathcal{M}}(X)$ be the set of
Borel measures on $X$. Slightly abusing notation, we use the same symbol
$f^{\#}$ for the following induced actions on $L^{2}(X)$ and ${\mathcal{M}%
}(X)$, respectively:
\[
\begin{aligned} f^{\#}(\phi) &= \phi\circ f^{-1} \quad \text{for} \; \phi \in L^2(X) \\ f^{\#}(\mu) &= \mu \circ f^{-1} \quad \text{for} \; \mu \in {\mathcal M}(X). \end{aligned}
\]

Let $F$ be an IFS on the space $X$, $G$ an IFS on the space $Y$, and $T_{FG}
\, : \, X \rightarrow Y$ a fractal transformation. Let $\mu_{F}$ and $\mu_{G}$
be the corresponding invariant measures with respect to the same probability
vector. If $f : X\rightarrow X$ is invertible a.e., then define induced
actions on $Y, L^{2}(Y),$ and ${\mathcal{M}}(Y)$ as follows. Again we use the
same notation ${\widehat f}^{\#}$ for the induced actions, where $y \in Y,
\phi\in L^{2}(Y)$, and $\mu\in{\mathcal{M}}(Y)$:
\[
\begin{aligned} g (y) := &{\widehat f}^{\#}(y) = T_{FG} \circ f \circ T_{GF}(y) \\ &{\widehat f}^{\#}(\phi) = g^{\#} (\phi) = U_{FG} \circ f^{\#} \circ U_{GF}(\phi) \\ &{\widehat f}^{\#} (\mu) = g^{\#} (\mu). \end{aligned}
\]
Note that, if $f$ is measure preserving on $X$, then by Theorem~\ref{thm:MP}
the induced function $g$ is measure preserving on $Y$.

By a flow on a space $X$ is meant a mapping $f \, : \, X \times{\mathbb{R}}
\rightarrow X$, with notation $f_{t}(x)$ often used instead of $f(x,t)$, such
that
\[
\begin{aligned} &f_0(x) = x \\ & f_s (f_t (x)) = f_{s+t} (x) \end{aligned}
\]
for all $x\in X$ and all $s,t \in{\mathbb{R}}$. Applying the induced actions
defined above to each function $f_{t}, \, t \in{\mathbb{R}}$, motivates the
following notion of fractal flows. Note that there are fractal flows on the
metric space $Y$, and the space of square integrable functions $L^{2}(Y)$ and
on the space of measures ${\mathcal{M}}(Y)$.

\begin{definition}
\label{def:flow} A flow $f_{t}$ on $X$ induces flows $(f_{t})^{\#}$ on
$L^{2}(X)$ and ${\mathcal{M}}(X)$, and, given a fractal transformation
$T_{FG}$, the flow $f_{t}$ induces \textbf{fractal flows} $(\widehat{ f_{t}%
})^{\#}$ on $Y, L^{2}(Y),$ and ${\mathcal{M}}(Y)$. Since, for a flow,
$f_{t}^{-1} = f_{-t}$, the explicit formulas for the flows are
\[
\begin{aligned} g_t (y) := &{\widehat f_t}^{\#}(y) = T_{FG} \circ f_t \circ T_{GF}(y) \\ &{\widehat f_t}^{\#}(\phi) = g_t^{\#} (\phi) = U_{FG} \circ f_ty^{\#} \circ U_{GF}(\phi) \\ &{\widehat f}_t^{\#} (\mu) = g_t^{\#} (\mu). \end{aligned}
\]

\end{definition}

If $f_{t}$ is a continuous, measure preserving flow on $(X, \mu)$, then it is
readily checked that the flow $f_{t}^{\#} : L^{2}(X) \rightarrow L^{2}(X)$ is
unitary, and hence provides a strongly continuous one parameter unitary group.
By Stone's theorem \cite{stonesthm} there is a unique self-adjoint operator
$L$ such that
\[
f_{t}^{\#}=e^{itL},
\]
where $iL$ is referred to as the \textit{infinitesimal generator}. Moreover,
${\widehat f}_{t}^{\#} = U_{FG} \circ f_{t}^{\#} \circ U_{GF} : L^{2}
(Y)\rightarrow L^{2}(Y)$ is a fractal flow with infinitesimal generator $i
\widehat L = U_{FG}\circ L \circ U_{GF}$. \vskip 2mm

\begin{example}
[Vector field flow]\label{vector}Let $V:{\mathbb{R}}^{2}\rightarrow
{\mathbb{R}}^{2}$ be a 2-dimensional vector field given by $V(x,y)=(-y,x)$.
Define a flow $f:{\mathbb{R}}^{2}\times{\mathbb{R}}\rightarrow{\mathbb{R}}%
^{2}$, in the usual way by solving the autonomous system
\[
\frac{d}{dt}f(\mathbf{a},t)=V(f(\mathbf{a},t)),\qquad f(\mathbf{a}%
,0)=\mathbf{a}.
\]
The solution is, with notation $f_{t}(\mathbf{a})=f(\mathbf{a},t)$ and
$\mathbf{a}=(a,b)$,
\[
f_{t}(a,b)=(a\,\cos t+b\,\sin t,\;a\,\sin t-b\,\cos t).
\]
With $a,b$ fixed, as a function of $t$, the flow curves are circles centered
at the origin, so the domain of the flow $f_{t}(\mathbf{a})$ can be restricted
to $D\times{\mathbb{R}}$, where $D$ is the closed unit disk.

Now consider the area preserving fractal homeomorphism of
Example~\ref{ex:triangles}. Let $D$ be the largest inscribed disk in the
equilateral triangle $\bigtriangleup$. Without loss of generality, assume that
$D$ has radius $1$ and consider the flow $f_{t}(\mathbf{x})$ as in the
paragraph above. The fractally transformed flow, as in
Definition~\ref{def:flow}, is
\[
g_{t}(\mathbf{y})=T_{FG}\circ f_{t}\circ T_{GF}\,(\mathbf{y}),
\]
which as a function of $\mathbf{y}$, is area preserving. If the fractally
transformed vector field is denoted $\widehat{V}=T_{FG}\circ V\circ T_{GF}$,
then $g_{t}$ is the fractal flow of the vector field $\widehat{V}$. See Figure
\ref{wheels}.%
\begin{figure}[ptb]%
\centering
\includegraphics[
natheight=7.111400in,
natwidth=16.416700in,
height=1.8066in,
width=4.1321in
]%
{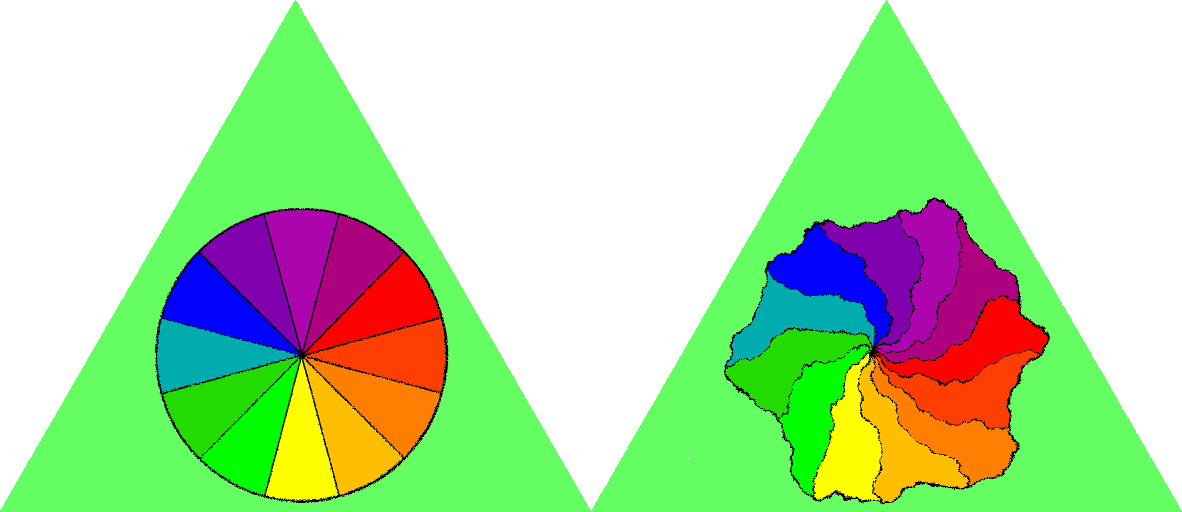}%
\caption{This image relates to Example \ref{wheels}.}%
\label{wheels}%
\end{figure}

\end{example}

\begin{example}
[Fractal flows on the unit interval and the circle]\label{ex:flow}

Consider the Lebesgue measure preserving flow on a line segment $[0,1]$ or the
circle $S^{1}$ defined by $f_{t}:[0,1]\rightarrow[0,1],$ $t\in(-\infty
,\infty),$ defined by
\[
x\mapsto(x+t)\operatorname{mod}1\text{.}%
\]

Consider any measure $\rho$ supported on $[0,1]$ that is absolutely continuous
with respect to Lebesque measure $\mu$. We may treat $\rho$ as a model for the
brightness and colours of a one-dimensional picture: the rate at which light
of a set of frequencies is emitted, or reflected, in unit time under steady
illumination by the Borel set $B$ is $\rho_{0}(B)$; see \cite{superfractals}.
A vector of measures $(\rho_{R},\rho_{G},\rho_{B})$ represents the red, green,
and blue components. With notation as above, $f_{t}^{\#} : \mathcal{M
}\rightarrow\mathcal{M}$ is a flow on $\mathcal{M}$. The orbit of a particular
measure $\rho_{0}$ models the picture being transported/translated at constant
velocity along the line segment (what comes out at one end of the line segment
immediately reenters the other end) or around the circle $S^{1}$.

Given an initial measure $\rho_{0}$, absolutely continuous with respect to
Lebesque measure, consider its orbit $\rho_{t} = f_{t}^{\#}(\rho_{0})$, i.e.
$\rho_{t}(B) = \rho_{0} ( f_{-t} B)$. Interpreted in the model, $\rho_{t}$ is
the translated picture/measure. By the Radon Nikodym theorem there is a
measurable function $\varrho_{0}$ such that
\[
\rho_{0}(B)=\int\limits_{B}\varrho_{0}(x) \, d\mu=\langle\chi_{B},\varrho_{0}
\rangle
\]
for all Borel sets $B$, where $\chi_{B}$ is the indicator function for $B$. It
follows that
\[
\rho_{t}(B)=\int\limits_{B}\varrho_{t}(x)\, d\mu\, =\, \langle\chi_{B}%
,\varrho_{t} \rangle\, =\, \langle f_{t}^{\#} \chi_{B},\varrho_{0} \rangle,
\]
where $\varrho_{t}(x) = f_{-t}^{\#} (\varrho_{0})(x) = \varrho_{0} ((x+t)
\mod
1)$, and the last equality by a change of variable. Letting $V_{t} :=
f_{t}^{\#} : L^{2}(S^{1}) \rightarrow L^{2}(S^{1})$ , by the comments prior to
this example, $f_{t}^{\#}=e^{itL}$, where $L$ is a self-adjoint operator. It
is well-known that, for this choice of the flow $f_{t}$, the operatort $L$ is
an extension of the differential operator $-i\frac{d}{dx}$ acting on
infinitely differentiable functions on $S^{1}$. Therefore, on an appropriate
domain,
\[
V_{t} =e^{itL} = e^{t\frac{d}{dx}}.
\]

Let $T_{FG}:[0,1]\rightarrow\lbrack0,1]$ be a uniform Lebesgue
measure-preserving fractal transformation, as considered in
Section~\ref{sec:ss}, and let $U_{FG}:L^{2}([0,1])\rightarrow L^{2}([0,1])$ be
the corresponding unitary transformation. Then the fractal flow
\[
\widehat{V}_{t}:={\widehat{f}}_{t}^{\#}=U_{FG}\,V_{t}\,U_{GF}%
\]
is again a strongly continuous one parameter unitary group generated by the
self-adjoint operator $\widetilde{L}:=U_{FG\,}L\,U_{GF}$.

Figure \ref{allshifts} illustrates a fractal flow on $[0,1]$ for the case of
$T_{FG_{1}}$ in Example \ref{ex:1} and Section~\ref{sec:ss}. The bottom strip
shows an initial function $\varphi$ on the interval $[0,1]$. In its orbit
$V_{t}(\varphi),\,t\geq0$, this picture slides to the right, colours going off
the right-hand end and coming on at the left end, cyclically (not in the
figure). From the top of the figure reading downwards, the successive strips
show the same orbit under the fractal flow ${\widehat{V}}_{t}$ at times
$t=0,1,2,...,7$. Then there is a white gap, followed by the flow at time
$t=100$. \vskip2mm

A surprising property of the flow orbit $\rho_{t}$ is that it is a continuous
function of $t$, although $U_{FG}$ may map continuous functions to
discontinuous ones. The proof is a consequence of the fact \cite[Proposition
2.5]{SS} that $\| f_{t}^{\#} \varrho- \varrho\|_{L^{1}} \rightarrow0$ as
$t\rightarrow0$.
\end{example}%

\begin{figure}[ptb]%
\centering
\includegraphics[
natheight=5.472500in,
natwidth=7.111400in,
height=2.0349in,
width=3.9851in
]%
{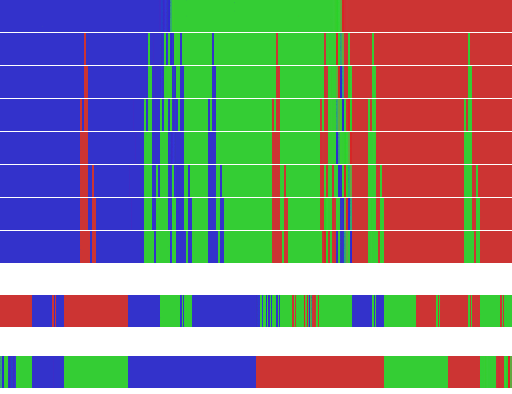}%
\caption{See text. Illustration of a fractal flow.}%
\label{allshifts}%
\end{figure}

\end{document}